\theoremstyle{plain}
\newtheorem{lemma}{Lemma}[section]
\newtheorem{theorem}[lemma]{Theorem}
\newtheorem{remark}[lemma]{Remark}
\newtheorem{proposition}[lemma]{Proposition}
\newcommand{\bbE}{\mathbb E}
\newcommand{\<}{\langle}
\renewcommand{\>}{\rangle}
\newcommand{\Rmnum}[1]{\expandafter\@slowromancap\romannumeral #1@}
\begin{document}
\title[Novel splitting schemes for stochastic Langevin equation]{
A new class of splitting methods that preserve ergodicity and exponential integrability for stochastic Langevin equation}
\author{Chuchu Chen, Tonghe Dang, Jialin Hong, Fengshan Zhang*}
\address{LSEC, ICMSEC,  Academy of Mathematics and Systems Science, Chinese Academy of Sciences, Beijing 100190, China,
\and 
School of Mathematical Sciences, University of Chinese Academy of Sciences, Beijing 100049, China}
\email{chenchuchu@lsec.cc.ac.cn; dangth@lsec.cc.ac.cn; hjl@lsec.cc.ac.cn; zhangfs23@lsec.cc.ac.cn}
\thanks{This work is funded by the National key R\&D Program of China under Grant (No. 2020YFA0713701), National Natural Science Foundation of China (No. 12031020), and by Youth Innovation Promotion Association CAS, China.}
\thanks{*Corresponding author.}

\begin{abstract} 
In this paper, we propose a new class of  
splitting methods to solve the stochastic Langevin equation, which can simultaneously preserve the ergodicity  and  exponential integrability of the original equation. 
The central idea is to extract a stochastic subsystem that possesses the strict dissipation from the original equation,   
which is inspired by the inheritance of the Lyapunov structure for obtaining the ergodicity. 
We prove that the  exponential moment of the numerical solution is bounded, thus validating the exponential integrability of the proposed methods.  
Further, we show that under  moderate verifiable conditions, the  methods have the first-order convergence in both strong and weak senses,  
and  we present several concrete splitting schemes based on the  methods. 
The splitting strategy of methods can be readily extended to construct conformal symplectic methods and high-order  methods that preserve both the ergodicity and the exponential integrability,  
as demonstrated  in numerical experiments.  
Our numerical experiments also show that the proposed methods have good performance  in the long-time simulation.



 \end{abstract}
\keywords {Stochastic Langevin equation $\cdot$ Splitting  method, Ergodicity $\cdot$ Exponential integrability $\cdot$ Convergence}

\maketitle

\section{Introduction}\label{s0}
In this paper, we  study the construction and numerical analysis of splitting  methods that preserve intrinsic  properties of  the following stochastic Langevin equation, 
\begin{equation}\label{eq:0}
\left\{\begin{aligned}
    dP(t) &= -\upsilon P(t)dt-\nabla U(Q(t)) dt+\sigma dW_t,
    \\
    dQ(t) &= P(t)dt,
\end{aligned} \right.
\end{equation}
where the  initial
value $(P(0), Q(0))\in \mathbb{R}^{2}$ is deterministic, $\upsilon>0$ is the  friction coefficient,  $\sigma>0$ is the diffusion  coefficient, $U>0$ is the potential function which may exhibit  superquadratic growth,  and $W_t$ is a $1$-dimensional standard Wiener process on a filtered
complete probability space $(\Omega, \mathcal{F},  \{\mathcal{F}_t\}_{t\geq 0}, \mathbb{P})$.  For simplicity, we consider the $\mathbb R^2$-valued solution of \eqref{eq:0}, and in fact, our results hold 
for the general $\mathbb R^{2d}$ case.  
The dynamical variables $Q(t)$ and $P(t)$ denote the position and momentum  of
a Hamiltonian system with energy function
\begin{align}\label{energy1}H_0(p,q)=\frac{p^2}{2}+U(q).
\end{align}  
 Stochastic Langevin equation has wide application in  many
fields, such as chemical interactions, molecular simulations and quantum systems, see e.g. \cite{Coffey2012,Gillespie2000}. For instance, it is a fundamental   model for describing the behavior of microscopic particles in statistical physics systems, which captures the dynamic characteristics of
particles under the influence of random collisions from surrounding molecules. 

\subsection{Ergodicity and exponential integrability}
It  is known that the dynamical system generated by \eqref{eq:0} is   ergodic, i.e., for all smooth test functions $g,$
\begin{align*}
\lim_{T\to\infty}\frac1T\int_0^T\mathbb E[g(P(t),Q(t))]dt=\int_{\mathbb R^2}g(p,q)\pi(dp,dq)\text{ in }L^2(\mathbb R^2;\pi),
\end{align*} 
which is proved by the uniform moment boundedness  and the H\"ormander condition for the  solution. 
Here,  $\pi$ is the unique invariant measure, which is characterized by the  Gibbs density function, $
\pi(dp,dq)=\frac1Ze^{-\frac{2\upsilon}{\sigma^2} H_0(p,q)}dpdq,
$ 
where $Z$ is a normalization constant to ensure $\int_{\mathbb R^2}\pi(dp,dq)=1;$ see e.g. \cite{PavGri2014}.  Ergodicity describes the unity between state space and time, implying  that the long-time behavior of the stochastic process can be effectively captured by the invariant measure.  
The integral $\int_{\mathbb R^2}g(p,q)\pi(dp,dq)=:\pi(g)$, 
often referred to as the ergodic limit, is closely related to some important macroscopic physical quantities 
in practical applications. Thus,  approximating the ergodic limit becomes  crucial  for predicting these physical quantities.  
A fundamental approach to approximating the ergodic limit is by constructing numerical methods that preserve the ergodicity of \eqref{eq:0},  
which ensures that the true statistical properties of the solution process can be  accurately reflected  by numerical solutions. 

Exponential integrability is an important  property that helps in establishing theoretical analysis of systems across various fields, including stability analysis and large deviation theory; see e.g., \cite{Cox2013} and references therein. 
When  the potential $U$ is a polynomial  of   even order, 
\eqref{eq:0} is proved to admit the exponential integrability, i.e., 
 \begin{align}\label{exp_H}
\sup_{t\in[0,T]}\mathbb E\Big[\exp\Big\{\frac{C_{1}H_0(P(t),Q(t))}{e^{\sigma^{2}t}}\Big\}\Big]\leq e^{C_{2}(T+1)+H_0(P_0,Q_0)}, 
\end{align} 
where $C_1, C_2>0$ are some  generic constants.  We give the detailed proof in Appendix \ref{s2-1}. 
 For stochastic differential equation with the  non-monotone type condition, for example, \eqref{eq:0} with $U$ being of superquadratic growth, it is important to  ensure  the preservation of exponential integrability for numerical methods.  This enables  
establishing  a positive strong convergence order for   the numerical solution; see e.g.  \cite{dai2023orderone,Hutzen2018}. 

Therefore, for \eqref{eq:0} with the superquadratically growing $U$, the study on numerical methods that preserve both the ergodicity and exponential integrability  is of great importance, as it provides an effective way  to accurately  capture  the essential dynamics of the original  system in both theoretical and practical contexts.  

\subsection{Existing works, motivation, and plan}
For the stochastic Langevin equation with the quadratically growing $U$, there have been many works on the construction and convergence analysis for ergodicity-preserving numerical methods;   see  e.g. \cite{Longtime1,  Longtime2, Hongbook2022,Hongbook2019,IMA16, Matting2002, PhyD07} and references therein. 
While for the superquadratic case, the analysis is more technical and  there are only a few works. 
A  pioneering work is \cite{Talay2002}, where the ergodicity and first-order weak convergence of the implicit Euler scheme are   established. 
For the strong convergence analysis, as previously mentioned, 
 exponential integrability of the numerical solution  becomes crucial for deriving  the convergence order.  
For example, authors in \cite{CuiSheng2022} prove  the exponential integrability of the splitting averaged vector field scheme  for \eqref{eq:0} and establish its strong convergence order. See also \cite{dai2023orderone, Hutzen2018} for the study of exponential integrability and strong convergence order  of a class of stopped increment-tamed Euler approximations for stochastic differential equations.

We aim to construct new class of numerical methods of \eqref{eq:0} that preserve both the ergodicity and exponential integrability based on  the splitting technique. Splitting techniques are widely recognized as useful tools not only  for constructing numerical methods that preserve the properties of the original equation, but also  for obtaining high-order numerical schemes; see e.g. \cite{Longtime2,IMA16} and references therein. In order to describe splitting methods in a convenient way, it is useful to introduce the generators
$\mathcal L_1:=-\nabla U(q) \nabla_p,\,\mathcal L_2:=p\nabla _q,\,\mathcal L_3:=-\upsilon p\nabla_p,$ and $\mathcal L_4:=\frac12\sigma^2\Delta_p.$ Then the generator of \eqref{eq:0} is $\mathcal L=\mathcal L_1+\mathcal L_2+\mathcal L_3+\mathcal L_4.$ We use $e^{\tau \mathcal L_i}$ to denote the one-step evolution operator  of the dynamics generated by $\mathcal L_i$ with step-size $\tau.$

As for \eqref{eq:0}, one usually splits the original equation as an integrable    deterministic Hamiltonian subsystem and a solvable  stochastic subsystem.  
 For example,  a  commonly used   splitting of   \eqref{eq:0} 
has the evolution operator $\mathcal P_{\tau}^{\mathcal L_1+\mathcal L_2,\mathcal L_3+\mathcal L_4}:=e^{\tau (\mathcal L_1+\mathcal L_2)}e^{\tau (\mathcal L_3+\mathcal L_4)},$ which determines the splitting solution $\{(P_{\tau}(t_{n}),Q_{\tau}(t_{n}))^{\top}\}_{n\in\mathbb N}$ with $t_n:=n\tau.$ 
Here, $e^{\tau(\mathcal L_1+\mathcal L_2)}$ is the one-step evolution operator  of the deterministic Hamiltonian subsystem  
\begin{align*}
d\binom{\bar P(t)}{\bar Q(t)}=\binom{-\nabla U(\bar Q(t)) dt}{\bar P(t)dt}, \quad \binom{\bar  P(t_{n})}{\bar  Q(t_n)}= \binom{ P_{\tau}(t_{n})}{ Q_{\tau}(t_{n})}, 
\end{align*}
 whose solution satisfies $H_0(\bar P(t_{n+1}),\bar Q(t_{n+1}))=H_0(\bar P(t_n),\bar Q(t_n)),$ 
and $e^{\tau(\mathcal L_3+\mathcal L_4)}$ represents that of the stochastic subsystem 
\begin{align*}
d\binom{\tilde P(t)}{\tilde Q(t)}=\binom{-\upsilon \tilde P(t)dt+\sigma dW_t}{0}, \quad \binom{\tilde P(t_{n})}{\tilde Q(t_n)}= \binom{\bar P(t_{n+1})}{\bar Q(t_{n+1})},
\end{align*} 
whose solution $(\tilde P(t_{n+1}),\tilde Q(t_{n+1}))^{\top}=(P_{\tau}(t_{n+1}),Q_{\tau}(t_{n+1}))^{\top}$ 
satisfies 
 that $\bbE[\tilde P(t_{n+1})^2]=e^{-\upsilon \tau}\bbE[\tilde P(t_n)^2]+\frac{\sigma^2}{2\upsilon}(1-e^{-2\upsilon \tau})$ and $\bbE[U(\tilde Q(t_{n+1}))]=\bbE[U(\tilde Q(t_n))].$ It is known that this splitting admits the exponential integrability (see e.g. \cite{CuiSheng2022}). 
The well-known Lyapunov structure  for obtaining  the ergodicity has the form of   \begin{align}\label{diss}
\bbE[H_0(P_{\tau}(t_{n+1}),Q_{\tau}(t_{n+1}))|\mathcal F_{t_n}]\leq \alpha H_0(P_{\tau}(t_n),Q_{\tau}(t_n))+\beta 
\end{align}
with  $\alpha\in(0,1)$ and $\beta>0$.  Since the deterministic Hamiltonian subsystem is energy-preserving, it is critical to possess the strict dissipation for the stochastic subsystem to obtain the ergodicity, which fails for this splitting as illustrated by red line in  Figure \ref{mo1}.

\begin{figure}[H]
\centering
\subfloat{
\begin{minipage}[t]{0.4\linewidth}
\centering
\includegraphics[height=4cm,width=6.0cm]{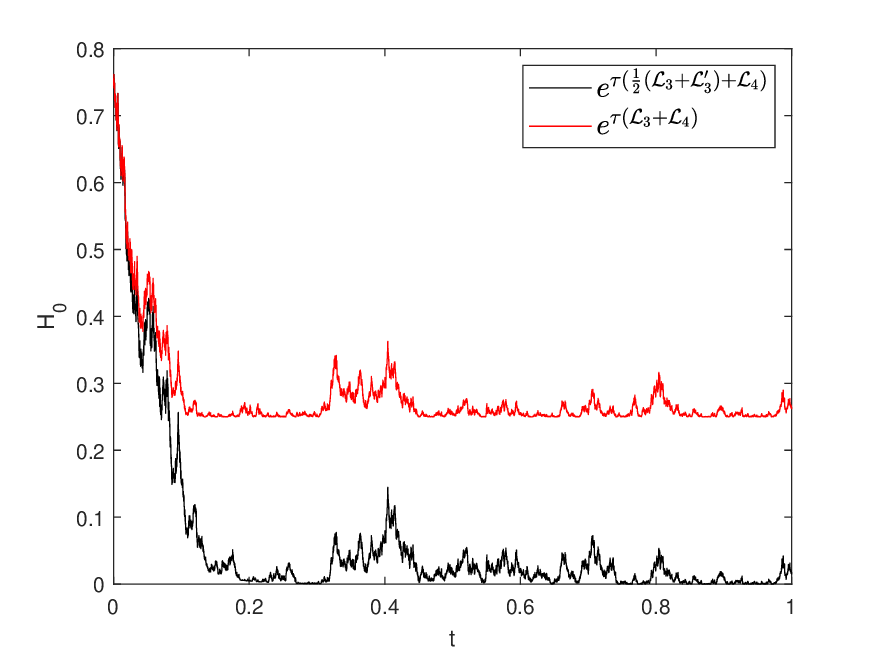}
\end{minipage}
}    
\centering
\caption{Evolution of  $H_0(\tilde P(t),\tilde Q(t))$.}\label{mo1}
\end{figure}

In order to make the stochastic subsystem be strictly dissipative such that  the resulted splitting system can inherit the Lyapunov structure, 
we introduce  a new operator $\mathcal L'_3:=-\upsilon q\nabla _q.$ Then we propose a novel splitting strategy as \begin{align}\label{newstra}\mathcal P_{\tau}^{\mathcal L_1+\mathcal L_2+\frac12(\mathcal L_3-\mathcal L'_3),\frac12(\mathcal L_3+\mathcal L'_3)+\mathcal L_4}:=e^{\tau(\mathcal L_1+\mathcal L_2+\frac12(\mathcal L_3-\mathcal L'_3))}e^{\tau(\frac12(\mathcal L_3+\mathcal L'_3)+\mathcal L_4)}.\end{align}  Precisely, we have 
\begin{align}\label{stra2}
d\binom{P(t)}{Q(t)} 
=\underset{\Psi^D}{\underbrace{\binom{-\frac{\upsilon}{2}P(t)-\nabla U(Q(t))}{P(t)+\frac{\upsilon}{2}Q(t)}dt}}+\underset{\Psi^S}{\underbrace{\binom{-\frac{\upsilon}{2}P(t)dt+\sigma dW_t}{-\frac{\upsilon }{2}Q(t)dt}}}, 
\end{align} 
 where $\Psi^D$ and $\Psi^S$ are flows for the deterministic subsystem and stochastic subsystem, respectively. This new splitting strategy has several advantages: \begin{itemize} \item[(\romannumeral1)] The deterministic subsystem $\Psi^D$ is still a Hamiltonian system  with the new Hamiltonian  
 \begin{align}\label{energy2}H(p,q):=H_0(p,q)+\frac{\upsilon }{2}pq. \end{align}
Moreover, the new Hamiltonian $H$ is equivalent to the original  energy function $H_0$ in a certain sense, i.e., there exists $C_0>0$ such that $C_1H_0\leq H+C_0\leq C_2(H_0+1)$ with some positive constants $C_1\leq C_2;$
\item[(\romannumeral2)] The stochastic subsystem $\Psi^S$ is strictly dissipative (see also black line in Figure \ref{mo1}) such that  the splitting system admits  the Lyapunov structure \eqref{diss} with the Hamiltonian $H$, $\alpha=e^{-\upsilon \tau},$ and some $\beta>0.$  Therefore, the splitting system is uniquely ergodic; see Proposition \ref{lem:2-1}.
\end{itemize} 

Based on the proposed splitting strategy \eqref{stra2}, we  construct a new class of splitting  methods that can simultaneously preserve the ergodicity and the exponential integrability of \eqref{eq:0}. 
To be specific,  
these splitting methods are obtained by applying the general conservative  methods to the deterministic part  and solving  the stochastic part  exactly.   
A key ingredient for the ergodicity of the  methods is the uniform moment boundedness of  numerical solutions over long  time.  
This is achieved  by the well balance between the preservation of the Hamiltonian for the deterministic
    subsystem and the dissipative property for the stochastic subsystem.   
In addition, this balance brings the exponential integrability of the  methods as well, controlling the exponential moment of numerical solution from explosion over finite time. 
Combining the stochastic Gr\"onwall inequality, 
we show that   the proposed methods have the first-order strong convergence under some moderate verifiable conditions on the one-step approximation.  Moreover, leveraging the Kolmogorov equation and the  uniform moment boundedness, we prove  the first-order convergence for the ergodic limit of the proposed methods. 


Several concrete splitting schemes based on our methods are presented  by incorporating the developed conservative  methods for  deterministic Hamiltonian systems. Then we perform  numerical experiments to verify our theoretical  result on strong and weak convergences in Section \ref{sec4.1}. In addition,   
as shown  by numerical experiments, the proposed splitting methods have the good ability in simulating the ergodic limit of the exact solution. 
As part of our investigation, based on our splitting strategy, we construct the second-order  methods that preserve both the ergodicity and the exponential integrability,  by employing the Strang splitting technique, and also present the conformal symplectic methods by applying  the symplectic methods for the deterministic Hamiltonian system, which    
 are demonstrated by  numerical experiments in Sections \ref{sec4.2} and \ref{sec4.3}.    
   

The rest of the paper is organized as follows. In Section \ref{scheme}, we  introduce the new class of  splitting  methods and show the inheritance of the ergodicity and the exponential integrability. 
In Section
\ref{order}, by using the moment properties of numerical solution, we prove that the strong and weak convergence orders are both $1$.  In section
\ref{s4}, we present       numerical experiments to verify our theoretical results.  
Appendix \ref{s2-1} is devoted to  some auxiliary proofs. 

Throughout this article, we use $C$ to denote a positive constant which
may not be the same in each occurrence. More specific constants which depend on
certain parameters $a,b,\ldots$ are numbered as $C(a,b,\ldots).$

\section{A new class of  splitting  methods}\label{scheme}
In this section, we aim to introduce   a new class of splitting methods that preserve both the ergodicity and the exponential integrability of \eqref{eq:0}. We take the potential $U(q):=\frac14q^4$ in this paper as an example to illustrate the main idea.   Some concrete numerical schemes based on our methods are also given.  In addition,  we present  the uniform moment boundedness, the ergodicity, and the exponential integrability of numerical solutions for proposed methods.  
\subsection{Construction} 

Let $\tau\in(0,1)$ be the uniform  time-step size and   $t_n:=n\tau,\,n\in \mathbb N$.
Recalling  \eqref{newstra}, \begin{align*}\mathcal P_{\tau}^{\mathcal L_1+\mathcal L_2+\frac12(\mathcal L_3-\mathcal L'_3),\frac12(\mathcal L_3+\mathcal L'_3)+\mathcal L_4}=e^{\tau(\mathcal L_1+\mathcal L_2+\frac12(\mathcal L_3-\mathcal L'_3))}e^{\tau(\frac12(\mathcal L_3+\mathcal L'_3)+\mathcal L_4)},\end{align*}   we split \eqref{eq:0} in the time interval $T_n:=[t_n,t_{n+1})$ into a deterministic Hamiltonian system with one-step evolution operator $e^{\tau(\mathcal L_1+\mathcal L_2+\frac12(\mathcal L_3-\mathcal L'_3))}$ and a linear stochastic differential equation with one-step  evolution operator $e^{\tau(\frac12(\mathcal L_3+\mathcal L'_3)+\mathcal L_4)}$. 
The solution of the splitting system is written as 
 \begin{align}\label{split}
 X_{\tau}(t_{n+1})=\Phi_{T_n}^S\Phi_{T_n}^D(X_{\tau}(t_n)),\quad n\in\mathbb N,
 \end{align}
 where $\{\Phi^D_{T_n}(t)\}_{t\in T_n}$ is the flow of the nonlinear Hamiltonian  system with random initial datum
 \begin{equation}\label{sub_ex1}
 \begin{cases}
 d\bar{P}_{T_n}(t) = -\frac{\upsilon}{2} \bar{P}_{T_n}(t)dt-{\bar{Q}_{T_n}}^3(t)dt,\quad \bar{P}_{T_n}(t_n)=P_{\tau}(t_n),
  \\
  d\bar{Q}_{T_n}(t) = \bar{P}_{T_n}(t)dt +\frac{\upsilon}{2}\bar{Q}_{T_n}(t)dt,\quad \bar{Q}_{T_n}(t_n)=Q_{\tau}(t_n), \end{cases}
 \end{equation}
 and $\{\Phi^S_{T_n}(t)\}_{t\in T_n}$ is the flow  of the linear stochastic differential equation
 \begin{equation}\label{sub_ex3}
 \begin{cases}
 d \tilde{P}_{T_n}(t) = -\frac{\upsilon}{2} \tilde{P}_{T_n}(t)dt +\sigma dW_t,\quad \tilde{P}_{T_n}(t_n)=\bar{P}_{T_n}(t_{n+1}),
   \\
    d \tilde{Q}_{T_n}(t) =-\frac{\upsilon}{2} \tilde{Q}_{T_n}(t) dt,\quad \tilde{Q}_{T_n}(t_n)=\bar{Q}_{T_n}(t_{n+1}), \end{cases}
 \end{equation}
 and $X_{\tau}(t_{n+1})=(P_{\tau}(t_{n+1}),Q_{\tau}(t_{n+1}))^{\top}=(\tilde P_{T_n}(t_{n+1}),\tilde Q_{T_n}(t_{n+1}))^{\top}.$  
 Especially, for $t\in T_0,$ the initial datum for \eqref{sub_ex1} is $(\bar P_{T_0}(0),\bar Q_{T_0}(0))=(P(0),Q(0)).$ It is observed that for any constant $C$,  $H+C$ is the Hamiltonian of \eqref{sub_ex1}.    

We apply the one-step  approximation  to \eqref{sub_ex1} which preserves the Hamiltonian and solve the linear subsystem  \eqref{sub_ex3} exactly to obtain a new class of splitting methods, the numerical solution $X_{n}:=(P_{n},Q_{n})^{\top}$ is defined recurrently by $(P_0, Q_0) := (P(0),Q(0)),$ and 
\begin{align}\label{SAVF}
X_{n+1}=\Phi^S_{T_n,t_{n+1}}\Upsilon^D_{\tau}(X_n),\quad n\in\mathbb N,
\end{align}
where $\Upsilon^D_{\tau}$ represents the one-step mapping defined by   
\begin{equation}\label{eq:2-1}
\begin{cases}
\bar{P}_{T_n,t_{n+1}} = P_n +\mathcal A_{\tau}^n,\\
\bar{Q}_{T_n,t_{n+1}} = Q_n+\mathcal B^n_{\tau},
\end{cases}
\end{equation}
and $\Phi^S_{T_n,t}:=(\tilde P_{T_n,t},\tilde Q_{T_n,t})^{\top}$  denotes the solution of \eqref{sub_ex3} at time $t\in T_n$  with initial datum $(\tilde P_{T_n,t_n},\tilde Q_{T_n,t_n})=(\bar P_{T_n,t_{n+1}},\bar Q_{T_n,t_{n+1}})$. 
Here,  $\mathcal A^n_{\tau}$ and $\mathcal B^n_{\tau}$ are measurable maps defined, respectively, as $\mathcal A^n_{\tau}:=\mathcal A_{\tau}(P_n,Q_n,\bar P_{T_n,t_{n+1}},\bar Q_{T_n,t_{n+1}})$ and $\mathcal B^n_{\tau}:=\mathcal B_{\tau}(P_n,Q_n,\bar P_{T_n,t_{n+1}},\bar Q_{T_n,t_{n+1}})$. In addition, without  loss of generality, the conservative method  \eqref{eq:2-1} is required to be solvable, namely, there exist measurable maps $\psi_{\tau},\phi_{\tau}:\mathbb R^2\to\mathbb R$ such that $\bar P_{T_n,t_{n+1}}=\psi_{\tau}(P_n,Q_n)$ and $\bar Q_{T_n,t_{n+1}}=\phi_{\tau}(P_n,Q_n).$

For the  Hamiltonian  system, there have been some conservative  methods 
 in the literature,  such as the
average vector field (AVF) method, the discrete-gradient (DG) method, 
and partitioned AVF (PAVF) method; see e.g. \cite{PAVF, CCD20,jinBIT, AVF,Tapley,zhouzhang} and references therein.   
  Hence, one can obtain a class of splitting  schemes  via  \eqref{SAVF}. Such  approach to  constructing numerical schemes has distinct advantages, especially in intrinsic property-preserving aspect. 
We will show this in the next subsection.

At the end of this subsection, we give some  concrete  examples of the splitting  method  \eqref{SAVF} based on the conservative  methods for the Hamiltonian system.  
\begin{itemize}
\item[(\romannumeral1)] 
When the one-step mapping $\Upsilon^D_{\tau}$ in   \eqref{eq:2-1} is defined by  the AVF scheme,  which reads as  
\begin{align*}
&\bar{P}_{T_n,t_{n+1}} = P_n -\frac{\tau\upsilon}{4}(\bar{P}_{T_n,t_{n+1}}+P_n)-\tau\int_0^1\left(Q_n+\lambda(\bar{Q}_{T_n,t_{n+1}}-Q_n)\right)^3d\lambda,
    \\
  &  \bar{Q}_{T_n,t_{n+1}} = Q_n+\frac{\tau}{2}(\bar{P}_{T_n, t_{n+1}}+P_n)+\frac{\tau\upsilon}{4}(\bar{Q}_{T_n,t_{n+1}}+Q_n),
\end{align*}
the corresponding splitting scheme \eqref{SAVF} is  called the splitting AVF (SAVF) scheme below.   \item[(\romannumeral2)] When the one-step mapping $\Upsilon^D_{\tau}$ in   \eqref{eq:2-1} is defined by the  DG scheme, 
\begin{align*}
 &   \bar{P}_{T_n,t_{n+1}}=P_n-\tau \bar{\nabla}_qH(\bar{P}_{T_n,t_{n+1}},\bar{Q}_{T_n,t_{n+1}};P_n,Q_n),
\\
 &   \bar{Q}_{T_n,t_{n+1}}=Q_n+\tau \bar{\nabla}_pH(\bar{P}_{T_n,t_{n+1}},\bar{Q}_{T_n,t_{n+1}};P_n,Q_n), 
\end{align*}
the corresponding splitting scheme \eqref{SAVF} is called the  splitting DG (SDG)  scheme below. Here, 
 $\bar{\nabla} H(\hat{p},\hat{q};p,q):=(\bar{\nabla}_pH(\hat{p},\hat{q};p,q); \bar{\nabla}_qH(\hat{p},\hat{q};p,q))$ is defined as 
$
    \bar{\nabla}H(\hat{p},\hat{q};p,q)=\nabla H(\bar{p},\bar{q})+\frac{H(\hat{p},\hat{q})-H(p,q)-\nabla H(\bar{p},\bar{q})^T\delta}{\|\delta\|^2}\delta,
$
    where $
    \delta = (\hat{p}-p;\hat{q}-q),\quad (\bar{p}; \bar{q})= \left(\frac{\hat{p}+p}{2};\frac{\hat{q}+q}{2}\right).
$
\item[(\romannumeral3)] When the one-step mapping $\Upsilon^D_{\tau}$ in   \eqref{eq:2-1} is defined by  the PAVF  scheme 
\begin{align*}
  \bar{P}_{T_n,t_{n+1}}&=P_n-\frac{\tau\upsilon}{2}\bar{P}_{T_n,t_{n+1}} -\tau\int_0^1(Q_n+\lambda(\bar{Q}_{T_n,t_{n+1}}-Q_n))^3d\lambda,
  \\
  \bar{Q}_{T_n,t_{n+1}}&=Q_n+\frac{\tau}{2}(\bar{P}_{T_n,t_{n+1}}+P_n)+\frac{\tau\upsilon}{2}Q_n,
\end{align*}
the corresponding splitting scheme \eqref{SAVF} is called  the splitting PAVF (SPAVF) scheme below. 
\end{itemize}
\subsection{Ergodicity and exponential integrability}
In this subsection, we   first give  the uniform moment boundedness  of the numerical solution for the proposed splitting method \eqref{SAVF}. Then we prove that the numerical solution is uniquely ergodic,  and admits the exponential integrability. These properties play an important role in obtaining the strong and weak  convergence orders  of the proposed methods.  Denote $n_s:=\max\{n\in\mathbb N:t_n\leq s\}$ for $s\ge 0.$ 
\begin{lemma}
\label{lem:2}
For any $p\geq 1$ 
and $T>0,$ there exists a constant $C:=C(p,T)$ such that 
\begin{align}\label{bound2}
\mathbb E\big[\sup_{t\in[0,T]}(|\tilde{P}_{T_{n_t},t}|^{2p}+|\tilde{Q}_{T_{n_t},t}|^{4p})\big]\leq C(1+H^p(P_0,Q_0)).
\end{align}
In addition, 
there exists a constant $C:=C(p)>0$ such that 
\begin{align}\label{bound1}
\sup_{n\in\mathbb N}\mathbb{E}[|P_{n}|^{2p}+|Q_n|^{4p}]\leq C(1+H^p(P_0,Q_0)).
\end{align}
\end{lemma}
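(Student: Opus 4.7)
The strategy rests on two ingredients built into the splitting: (a) the conservative map $\Upsilon^D_\tau$ preserves the Hamiltonian $H$ from \eqref{energy2}, so $H(\bar P_{T_n, t_{n+1}}, \bar Q_{T_n, t_{n+1}}) = H(P_n, Q_n)$; and (b) the stochastic subsystem \eqref{sub_ex3} is strictly dissipative with respect to $H$. The key observation is that although the continuous-time process $\tilde X(t) := (\tilde P_{T_{n_t}, t}, \tilde Q_{T_{n_t}, t})$ has jumps at each grid point $t_n$, the composition $H \circ \tilde X$ is \emph{continuous} on $[0, T]$ by virtue of (a). I would work with the shifted Hamiltonian $\widetilde H := H + C_0$, with $C_0 > 0$ chosen so that $\widetilde H \geq 1$ everywhere (possible by the equivalence $C_1 H_0 \leq H + C_0 \leq C_2(H_0 + 1)$ stated in the paper), so that $\widetilde H^p$ is a well-defined positive quantity for any exponent $p \geq 1$.

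The plan is to apply It\^o's formula to $\widetilde H^p(\tilde X(t))$ on each sub-interval $[t_n, t_{n+1}]$ using the SDE \eqref{sub_ex3}, and to concatenate across the grid points via the continuity of $\widetilde H \circ \tilde X$. Denoting phase-space coordinates by $(a,b)$ to avoid any clash with the exponent, the generator $\mathcal L^S$ of \eqref{sub_ex3} satisfies $\mathcal L^S H(a,b) = -\upsilon H(a,b) - \tfrac{\upsilon}{4}b^4 + \tfrac{\sigma^2}{2}$, and the chain rule yields
\begin{equation*}
\mathcal L^S \widetilde H^p = p\widetilde H^{p-1}\mathcal L^S H + \tfrac{p(p-1)}{2}\widetilde H^{p-2}\sigma^2\bigl(a + \tfrac{\upsilon b}{2}\bigr)^2.
\end{equation*}
Completing the square in $H$ gives $(a + \upsilon b/2)^2 \leq C\widetilde H$, and Young's inequality applied to $b^4/4 = H - a^2/2 - \upsilon ab/2$ yields $b^4 \leq C\widetilde H$. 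Substituting these bounds and applying Young once more to the remaining $\widetilde H^{p-1}$ term produces the dissipative estimate $\mathcal L^S \widetilde H^p \leq -\tfrac{p\upsilon}{2}\widetilde H^p + C_p$. Taking expectations in the pieced-together It\^o identity and applying Gronwall gives the uniform-in-$t$ bound $\sup_{t\geq 0}\mathbb E[\widetilde H^p(\tilde X(t))] \leq C(1 + H^p(P_0, Q_0))$; specializing to $t = t_n$ and converting $\widetilde H^p$ back to $|P|^{2p} + |Q|^{4p}$ via the equivalence yields \eqref{bound1}.

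For the sup-in-time estimate \eqref{bound2}, I would take the supremum directly inside the It\^o decomposition
\begin{equation*}
\widetilde H^p(\tilde X(t)) = \widetilde H^p(P_0, Q_0) + \int_0^t \mathcal L^S\widetilde H^p(\tilde X(s))\,ds + \int_0^t p\widetilde H^{p-1}(\tilde X(s))\,\sigma\bigl(\tilde P(s) + \tfrac{\upsilon}{2}\tilde Q(s)\bigr)\,dW_s.
\end{equation*}
The drift integral is handled by Fubini together with the pointwise bound $|\mathcal L^S\widetilde H^p| \leq C(\widetilde H^p + 1)$ and the uniform moment estimate above. For the martingale, Burkholder--Davis--Gundy combined with $(\tilde P + \upsilon\tilde Q/2)^2 \leq C\widetilde H$ reduces the sup estimate to a constant multiple of $\bigl(\int_0^T \mathbb E[\widetilde H^{2p-1}(\tilde X(s))]\,ds\bigr)^{1/2}$, which is in turn controlled by the same Gronwall argument applied at exponent $2p-1$. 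The equivalence then converts the $\widetilde H^p$ bound into the desired bound on $\sup_t(|\tilde P|^{2p} + |\tilde Q|^{4p})$.

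The main technical obstacle will be the dissipativity of $\mathcal L^S\widetilde H^p$: one must select $C_0$ and the Young weights simultaneously so that the leading contraction $-p\upsilon\widetilde H^p$ dominates the lower-order $\widetilde H^{p-1}, \widetilde H^{p-2}$ perturbations produced by both the drift $-\upsilon b^4/4$ and the It\^o correction $(a + \upsilon b/2)^2$, uniformly in the exponent $p$. A secondary delicate point is the rigorous patching of It\^o's formula across the $t_n$ boundaries; this is legitimate because $\widetilde H \circ \tilde X$ is continuous (even though $\tilde X$ itself is not), and only finitely many boundaries lie inside $[0, T]$.
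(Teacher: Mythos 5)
Your proposal is correct and follows essentially the same route as the paper: It\^o's formula applied to powers of the shifted Hamiltonian along the stochastic subsystem, the dissipativity of that subsystem, the Hamiltonian preservation of the conservative map $\Upsilon^D_\tau$ to patch (in the paper, iterate) across grid points, and a maximal/BDG estimate plus a Gr\"onwall-type comparison for the supremum bound. The only real deviation is cosmetic: the paper proves the moment bounds by induction on integer $p$ (using the $\sup$-moment at order $p-1$ in the martingale estimate), whereas you treat all real $p\geq 1$ at once and close the BDG estimate with the uniform moment bound at exponent $2p-1$, which works equally well.
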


The proof of Lemma \ref{lem:2} is postponed to Appendix \ref{s2-1}. The uniform moment boundedness of the numerical solution implies the existence of the numerical invariant measure. 
Furthermore, we can show that the numerical invariant measure is uniquely ergodic. 
Before that, it is observed that by the Young inequality, there exists a constant  $C_H>0$ such that $H+C_H\ge 1$. For instant, one can take $C_H=\frac{\upsilon^4}{64}+1,$ then using the Young inequality twice gives \begin{align}\label{HCH}
H(p,q)+C_H&\ge \frac{p^2}{2}+\frac{q^4}{4}-\frac{\upsilon}{4}(\frac{2p^2}{\upsilon}+\frac{\upsilon q^2}{2})+C_H\notag\\
&\ge \frac{q^4}{4}-\frac{\upsilon^2}{16}(\frac{4}{\upsilon^2}q^4+\frac{\upsilon^2}{4})+C_H\ge 1.
\end{align}
In addition, there exist constants $C,C_e>0$ such that \begin{align}\label{equ:1}
C_e(p^2+q^4)\leq H(p,q)+C\leq   C(p^2+q^4+1).  
\end{align}
In fact, this can be obtained by applying the  Young inequality as follows 
\begin{align*}
&H(p,q)\ge \frac{p^2}{2}+\frac{q^4}{4}-\frac{\upsilon}{4}(\frac{p^2}{\upsilon}+\upsilon q^2)\ge \frac{p^2}{4}+\frac14 q^4-\frac{\upsilon^2}{4}(\frac{1}{2\upsilon^2}q^4+\frac{\upsilon^2}{2})\ge \frac18(p^2+q^4)-\frac{\upsilon^4}{8},\\
&H(p,q)\leq \frac{p^2}{2}+\frac{q^4}{4}+\frac{\upsilon}{4}(p^2+\frac{q^4}{2}+\frac12)\leq (\frac12+\frac{\upsilon}{4})(p^2+q^4+1).
\end{align*}
\begin{proposition}
\label{thm:2}
For each $\tau\in(0,1),$ the numerical solution $\{(P_n,Q_n)\}_{n\in \mathbb{N}}$ admits a unique invariant measure $\pi_{\tau}$. Furthermore, for $p\ge 1,$  there exist constants  $\kappa:=\kappa(p)\in (0,1)$ and
$C:=C(p)>0$ such that for any measurable functions $g$ satisfying  $|g|\leq (H+C_H)^p$, it holds 
\begin{align*}
|\mathbb{E}[g(P_n,Q_n)]-\pi_{\tau}(g)|\leq C\kappa^n(1+H^p(P_0,Q_0)).
\end{align*}
\end{proposition}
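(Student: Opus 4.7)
The plan is to apply a Harris-type theorem for $V$-uniformly ergodic Markov chains to the time-homogeneous chain $X_n := (P_n, Q_n)$ on $\mathbb R^2$. To avoid a clash of notation with the momentum variable, I shall write the exponent appearing in the statement as $r \geq 1$ and take as Lyapunov function $V_r := (H + C_H)^r$, which is bounded below by $1$ thanks to \eqref{HCH}. The two ingredients to verify are: (i) a Foster--Lyapunov drift condition
\begin{align*}
\mathbb E[V_r(X_{n+1}) \mid \mathcal F_{t_n}] \leq \gamma\, V_r(X_n) + K
\end{align*}
for some $\gamma \in (0,1)$ and $K > 0$, and (ii) a minorization condition $P^m(x, \cdot) \geq \eta\, \nu(\cdot)$ on every sublevel set $\{V_r \leq R\}$, for some integer $m \geq 1$, $\eta > 0$, and probability measure $\nu$. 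Given (i) and (ii), the standard Harris / Meyn--Tweedie theorem yields the existence and uniqueness of $\pi_\tau$ together with the weighted exponential bound $|\mathbb E[g(X_n)] - \pi_\tau(g)| \leq C \kappa^n V_r(X_0)$ for all measurable $g$ with $|g| \leq V_r$, which entails the claimed statement.

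To establish (i), I would exploit the split structure \eqref{SAVF}: since the conservative one-step map $\Upsilon^D_\tau$ preserves $H$ by construction, $H(\bar P_{T_n,t_{n+1}}, \bar Q_{T_n,t_{n+1}}) = H(P_n, Q_n)$. The stochastic flow $\Phi^S_{T_n,t_{n+1}}$ is explicitly solvable: $Q_{n+1} = e^{-\upsilon\tau/2}\bar Q_{T_n,t_{n+1}}$ and $P_{n+1} = e^{-\upsilon\tau/2}\bar P_{T_n,t_{n+1}} + \sigma \xi_n$, where $\xi_n$ is a centered Gaussian of variance $(1 - e^{-\upsilon\tau})/\upsilon$ independent of $\mathcal F_{t_n}$. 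Expanding $H(P_{n+1}, Q_{n+1})$ and using $\mathbb E[\xi_n] = \mathbb E[\xi_n^3] = 0$ together with $e^{-2\upsilon\tau} \leq e^{-\upsilon\tau}$ on the $Q^4$ contribution, one obtains
\begin{align*}
\mathbb E[H(X_{n+1}) \mid \mathcal F_{t_n}] \leq e^{-\upsilon\tau} H(X_n) + \frac{\sigma^2(1 - e^{-\upsilon\tau})}{2\upsilon},
\end{align*}
which, after shifting by $C_H$, gives (i) for $r = 1$. For general $r \geq 1$, the inequality $(a + b)^r \leq (1 + \varepsilon)^{r-1} a^r + C(\varepsilon, r) b^r$ applied to the deterministic and noise contributions, combined with Gaussian moment bounds and the two-sided equivalence \eqref{equ:1} between $H + C$ and $1 + p^2 + q^4$, promotes this to (i) with $\gamma = \gamma(r, \tau, \upsilon) < 1$ after choosing $\varepsilon$ sufficiently small.

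For (ii), observe that a single step of \eqref{SAVF} injects noise only in $P_{n+1}$ whereas $Q_{n+1}$ is deterministic given $X_n$, so the one-step kernel is singular with respect to Lebesgue measure on $\mathbb R^2$. I would therefore analyze the two-step kernel ($m = 2$). Writing $X_{n+2}$ as a smooth function of the independent Gaussian increments $(\xi_n, \xi_{n+1})$, the dependence of $Q_{n+2}$ on $\xi_n$ runs through $\phi_\tau(P_{n+1}, Q_{n+1})$, so the Jacobian determinant of the map $(\xi_n, \xi_{n+1}) \mapsto (P_{n+2}, Q_{n+2})$ equals $-\sigma^2 e^{-\upsilon\tau/2} \partial_p \phi_\tau(P_{n+1}, Q_{n+1})$. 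Since the continuous-time equation \eqref{sub_ex1} contains $\bar P_{T_n}\,dt$ in the $\bar Q_{T_n}$ update, $\phi_\tau$ satisfies $\partial_p \phi_\tau = \tau/2 + O(\tau^2)$ and is non-zero for small $\tau$. The change-of-variables formula then provides a strictly positive, jointly continuous two-step transition density, whose restriction to any compact set $\{V_r \leq R\} \times K_0$ is uniformly bounded below. Choosing $\nu$ proportional to Lebesgue measure on $K_0$ delivers (ii).

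The main technical obstacle is (ii): because the noise enters only the momentum coordinate at each step, one must carefully trace its propagation to the position coordinate through the composition of $\psi_\tau$ and $\phi_\tau$, and extract an $(P_n, Q_n)$-uniform positive lower bound on the two-dimensional density over compact sublevel sets of $V_r$. Once (i) and (ii) are in place, the classical Harris theorem closes the argument.
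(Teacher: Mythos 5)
Your proposal follows essentially the same route as the paper's proof: a Foster--Lyapunov drift condition obtained by combining the Hamiltonian preservation of the deterministic one-step map with the exactly solvable, strictly dissipative stochastic step, plus a nondegeneracy/minorization argument for the two-step kernel (since noise reaches the position coordinate only after two steps), concluded by a Harris/Meyn--Tweedie-type ergodic theorem, which is exactly the Mattingly--Stuart--Higham framework the paper invokes on the even-indexed subsequence before transferring to the full chain. The only differences are cosmetic: the paper obtains the two-step density by exact controllability in the increments $(\Delta W_0,\Delta W_1)$ rather than your Jacobian computation, and it handles the weight $(H+C_H)^p$ through the uniform moment bounds of Lemma \ref{lem:2} rather than by raising the Lyapunov function to the power $r$ --- both arguments resting on the same implicit nondegeneracy of $\phi_\tau$ in its first argument that you correctly identify as the main technical point.
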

\begin{proof}
We first show the ergodicity for the sequence $\{(P_{2n},Q_{2n})\}_{n\in\mathbb N}.$  By \cite[Theorem 2.5]{Matting2002}, the proof is split into the following two steps, which are related to the  Lyapunov condition and the   minorization condition, respectively. 

\textit{Step 1.} 
We show that 
there exist constants $\alpha\in(0,1)$ and $\beta\in(0,\infty)$ such that $$
\mathbb{E}[H(P_{n+1},Q_{n+1})+C_H|\mathcal{F}_n]\leq \alpha (H(P_n,Q_n)+C_H)+\beta.$$ In fact, it follows from   \eqref{SAVF} that \begin{align*}
  &H(P_{n+1},Q_{n+1})
  =\frac{|P_{n+1}|^2}{2}+\frac{|Q_{n+1}|^4}{4}+\frac{\upsilon}{2} P_{n+1}Q_{n+1}
  \\
   = &\, e^{-\upsilon \tau}\Big(\frac{1}{2}|\bar{P}_{T_n,t_{n+1}}|^2+\frac{1}{4}e^{-\upsilon\tau}|\bar{Q}_{T_n,t_{n+1}}|^4+\frac{\upsilon}{2}\bar{P}_{T_n,t_{n+1}}\bar{Q}_{T_n,t_{n+1}}\\
   &+\frac12|\int_{t_n}^{t_{n+1}}e^{-\frac{\upsilon}{2}(t_{n+1}-t)}\sigma
  dW_t|^2e^{\upsilon \tau}
+e^{\frac{\upsilon\tau}{2}}\bar{P}_{T_n,t_{n+1}}\int_{t_n}^{t_{n+1}}e^{-\frac{\upsilon}{2}(t_{n+1}-t)}\sigma dW_t\\&+\frac{\upsilon}{2}e^{\frac{\upsilon\tau}{2}}\bar{Q}_{T_n,t_{n+1}}\int_{t_n}^{t_{n+1}}e^{-\frac{\upsilon}{2}(t_{n+1}-t)}\sigma dW_t\Big).
\end{align*}
Noting  that $(\bar{P}_{T_n,t_{n+1}},\bar{Q}_{T_n,t_{n+1}})^{\top}$ is $\mathcal F_{t_n}$-measurable, which is independent of the increment $\int_{t_n}^{t_{n+1}}e^{-\frac{\upsilon}{2}(t_{n+1}-t)}\sigma dW_t$, we obtain   
\begin{align*}  \mathbb{E} [H(P_{n+1},Q_{n+1})|\mathcal F_{t_n}]
&\leq e^{-\upsilon \tau}H(\bar{P}_{T_{n},t_{n+1}},\bar{Q}_{T_n,t_{n+1}})+\frac{\sigma^2(1-e^{-\upsilon \tau})}{2\upsilon}\\
&
=e^{-\upsilon\tau}H(P_n,Q_n)+\frac{\sigma^2(1-e^{-\upsilon \tau})}{2\upsilon},
\end{align*}
where in the last step we use that the subsystem 
 \eqref{eq:2-1} preserves the Hamiltonian  $H$. 
 
 \textit{Step $2$.} For any given $y,y^*\in\mathbb R^2, $ we show that $\Delta W_0$ and $\Delta W_1$ can be determined to ensure that $(P_2, Q_2)^{\top}=y*=(y^*_1,y^*_2)^{\top}$ for any initial value $(P_0,Q_0)^{\top}=y$.  
In fact, the solvability  of 
\eqref{eq:2-1}  gives that $\bar P_{T_0,t_1}=\psi_{\tau}(y)$ and $\bar Q_{T_0,t_1}=\phi_{\tau}(y)$, which together  with \eqref{sub_ex3} leads to \begin{align*}
&P_1=e^{-\frac{\upsilon \tau}{2}}\psi_{\tau}(y)+\int_{t_0}^{t_1}e^{-\frac{\upsilon}{2}(t_1-t)}\sigma dW_t\,{\overset{d}=}\,e^{-\frac{\upsilon \tau}{2}}\psi_{\tau}(y)+\sigma\sqrt{\frac{1-e^{-\upsilon \tau}}{\upsilon\tau}}\Delta W_0,\\
&Q_1=e^{-\frac{\upsilon \tau}{2}}\phi_{\tau}(y),
\end{align*} 
where $X \,{ \overset{d}= }\,Y$ means that random variables $X,Y$ are equal in distribution.  Using again the solvability of $\bar Q_{T_1,t_2}$, we have 
$
\bar Q_{T_1,t_2}=\phi_{\tau}(P_1,Q_1)=e^{\frac{\upsilon \tau}{2}}Q_2=e^{\frac{\upsilon \tau}{2}}y^*_2,
$ which determines $P_1$ and hence gives  $\Delta W_0$. Then utilizing $y^*_1=P_2\,{\overset{d}=}\,e^{-\frac{\upsilon \tau}{2}}\psi_{\tau}(P_1,Q_1)+\sigma\sqrt{\frac{1-e^{-\upsilon \tau}}{\upsilon\tau}}\Delta W_1$ determines $\Delta W_1.$ In addition, the above  procedure  implies that the transition kernel $\mathbb{P}_{2n}(x,A),\,x\in\mathbb R^2,A\in\mathcal B(\mathbb R^2)$ admits a continuous  density function $p_{2n}(x,y)$ satisfying $
  \mathbb{P}_{2n}(x,A)=\int_Ap_{2n}(x,y)dy.$ 
  
Combining \textit{Steps 1--2} gives that $|\mathbb E[g(P_{2n},Q_{2n})]-\pi_{\tau}(g)|\leq C\kappa^n(1+H^p(P_0,Q_0))$. Then similar to the proof of \cite[Theorem 7.3]{Matting2002} and in virtue of  Lemma \ref{lem:2}, one can obtain the desired result for the  sequence $\{(P_n,Q_n)\}_{n\in\mathbb N}.$ The proof is finished. 
\end{proof}

The following proposition shows the exponential integrability of the numerical solution of \eqref{SAVF}, which indicates the boundedness of exponential moment of the numerical solution. 
\begin{proposition}
  \label{lem:3.1}
 There exists a  constant $C>0$ such that   
\begin{equation*}
\sup_{t_n\in[0,T]}\mathbb E\Big[\exp\Big\{\frac{C_e(|P_n|^2+|Q_n|^4)}{e^{\sigma^2t_n}}\Big\}\Big]\leq e^{C(T+1)+H(P_0,Q_0)},  
\end{equation*}
where $C_e$ is given in \eqref{equ:1}.
\end{proposition}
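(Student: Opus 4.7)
The plan is to exploit the explicit structure of the splitting scheme \eqref{SAVF}. The deterministic one-step map $\Upsilon^D_\tau$ preserves the Hamiltonian $H$ of \eqref{energy2}, so $H(\bar P,\bar Q) = H(P_n,Q_n)$, while the stochastic flow \eqref{sub_ex3} admits the closed form
\begin{equation*}
P_{n+1} = e^{-\upsilon\tau/2}\bar P + \Delta_n, \qquad Q_{n+1} = e^{-\upsilon\tau/2}\bar Q,
\end{equation*}
where $(\bar P,\bar Q):=\Upsilon^D_\tau(P_n,Q_n)$, and $\Delta_n := \sigma\int_{t_n}^{t_{n+1}} e^{-\upsilon(t_{n+1}-s)/2}\,dW_s \sim N(0,\sigma_n^2)$ with $\sigma_n^2 := \sigma^2(1-e^{-\upsilon\tau})/\upsilon$ is independent of $\mathcal F_{t_n}$. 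Set $\phi_n := e^{-\sigma^2 t_n}$. Substituting into $H(p,q) = p^2/2 + q^4/4 + \upsilon pq/2$ and using $e^{-2\upsilon\tau}\le e^{-\upsilon\tau}$ on the $\bar Q^4$-term, I would first establish the deterministic-plus-Gaussian bound
\begin{equation*}
H(P_{n+1},Q_{n+1}) \le e^{-\upsilon\tau}H(P_n,Q_n) + e^{-\upsilon\tau/2}\bigl(\bar P + \tfrac{\upsilon}{2}\bar Q\bigr)\Delta_n + \tfrac12\Delta_n^2.
\end{equation*}

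Next I would fix $C_H\ge \upsilon^4/64 + 1$ so that $H+C_H\ge 1$ and the algebraic identity $(\bar P + \tfrac{\upsilon}{2}\bar Q)^2 = 2H(\bar P,\bar Q) + \tfrac{\upsilon^2}{4}\bar Q^2 - \tfrac{\bar Q^4}{2}$ gives $(\bar P + \tfrac{\upsilon}{2}\bar Q)^2 \le 2(H(P_n,Q_n)+C_H)$. Multiplying the previous inequality by $\phi_{n+1}$, exponentiating, conditioning on $\mathcal F_{t_n}$, and applying the Gaussian formula $\bbE[\exp(a\Delta_n + b\Delta_n^2)] = (1-2b\sigma_n^2)^{-1/2}\exp\bigl(a^2\sigma_n^2/(2(1-2b\sigma_n^2))\bigr)$ (valid for $2b\sigma_n^2<1$) with $a = \phi_{n+1}e^{-\upsilon\tau/2}(\bar P + \tfrac{\upsilon}{2}\bar Q)$ and $b = \phi_{n+1}/2$, a decisive algebraic cancellation occurs: the factor $2$ from $(\bar P + \tfrac{\upsilon}{2}\bar Q)^2 \le 2(H+C_H)$ exactly balances the $1/2$ in front of $\Delta_n^2$, so the combined coefficient of $H(P_n,Q_n)+C_H$ in the resulting exponent collapses to
\begin{equation*}
\frac{\phi_{n+1}e^{-\upsilon\tau}}{1-\phi_{n+1}\sigma_n^2}.
\end{equation*}

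The main obstacle is showing that this coefficient is dominated by $\phi_n$ uniformly in $n$ and $\tau$, which is equivalent to $\phi_{n+1}\sigma_n^2 \le 1 - e^{-(\sigma^2+\upsilon)\tau}$. Since $\phi_{n+1}\le e^{-\sigma^2\tau}$, it suffices to verify that $g(\tau) := 1 - e^{-(\sigma^2+\upsilon)\tau} - e^{-\sigma^2\tau}\sigma^2(1-e^{-\upsilon\tau})/\upsilon \ge 0$ for every $\tau\ge 0$, and a direct differentiation gives $g(0)=0$ together with $g'(\tau) = \upsilon e^{-(\sigma^2+\upsilon)\tau} + \sigma^4 e^{-\sigma^2\tau}(1-e^{-\upsilon\tau})/\upsilon \ge 0$, so $g\ge 0$ throughout. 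This precise compatibility is the discrete analog of the It\^o cancellation underlying \eqref{exp_H}: the scaling factor $e^{-\sigma^2 t_n}$ in the exponent is calibrated exactly to absorb the noise variance $\sigma_n^2\sim \sigma^2\tau$ coming from the Gaussian moment generating function, which is why no restriction on the relative size of $\sigma$ and $\upsilon$ is needed.

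Collecting terms, the remaining prefactors are controlled by $(1-\phi_{n+1}\sigma_n^2)^{-1/2}\le e^{(\sigma^2+\upsilon)\tau/2}$ and $\exp(\phi_{n+1}(1-e^{-\upsilon\tau})C_H)\le e^{\upsilon C_H\tau}$, yielding the recursive estimate
\begin{equation*}
\bbE\bigl[e^{\phi_{n+1}(H(P_{n+1},Q_{n+1})+C_H)} \bigm|\mathcal F_{t_n}\bigr] \le e^{C\tau}\,e^{\phi_n(H(P_n,Q_n)+C_H)}
\end{equation*}
for a constant $C$ independent of $n$ and $\tau$. Taking full expectation and iterating over $n\tau \le T$ produces $\bbE[\exp(\phi_n(H(P_n,Q_n)+C_H))] \le e^{CT}\exp(H(P_0,Q_0)+C_H)$, which is of the desired form $e^{C(T+1)+H(P_0,Q_0)}$ after absorbing $C_H$ into the constant. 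The proof is then closed by the equivalence $C_e(|P_n|^2+|Q_n|^4) \le H(P_n,Q_n)+C_H$ from \eqref{equ:1}, which gives $\exp(C_e\phi_n(|P_n|^2+|Q_n|^4)) \le \exp(\phi_n(H(P_n,Q_n)+C_H))$ and hence the claimed bound.
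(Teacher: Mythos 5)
Your proposal is correct, but it reaches the conclusion by a genuinely different route than the paper. The paper treats the stochastic substep as a continuous-time flow on $[t_n,t_{n+1})$ and invokes a general exponential-integrability lemma (Lemma 2.5 of the cited work of Dang et al.), verifying the Lyapunov-type condition $\mathcal{D}H\,\tilde\mu+\tfrac12\mathrm{tr}(\mathcal D^2H\,\tilde\sigma\tilde\sigma^{\top})+\tfrac12|\tilde\sigma^{\top}\mathcal DH|^2\le(-\upsilon+\sigma^2)H+C$, then combines this with the Hamiltonian preservation of the deterministic step and iterates after the rescaling $H\mapsto e^{-\sigma^2 t_n}H$ --- i.e.\ it is a discrete-in-$n$, continuous-in-$t$ argument parallel to the proof of \eqref{exp_H}. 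You instead exploit the exact Gaussian form of the substep, derive the pathwise bound $H(P_{n+1},Q_{n+1})\le e^{-\upsilon\tau}H(P_n,Q_n)+e^{-\upsilon\tau/2}(\bar P+\tfrac{\upsilon}{2}\bar Q)\Delta_n+\tfrac12\Delta_n^2$, and close the recursion with the quadratic-exponential Gaussian moment formula; your identity $(\bar P+\tfrac{\upsilon}{2}\bar Q)^2=2H(\bar P,\bar Q)+\tfrac{\upsilon^2}{4}\bar Q^2-\tfrac{\bar Q^4}{2}$ and the monotonicity check $g(\tau)\ge0$ are exactly what make the coefficient $\phi_{n+1}e^{-\upsilon\tau}/(1-\phi_{n+1}\sigma_n^2)$ drop below $\phi_n$, and I verified these computations. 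What your approach buys is a completely self-contained, elementary proof that makes the discrete analogue of the It\^o cancellation transparent (the weight $e^{-\sigma^2 t_n}$ is calibrated to absorb the one-step noise variance $\sigma_n^2\approx\sigma^2\tau$); what the paper's route buys is uniformity of method --- the same exponential-Lyapunov lemma covers the exact equation, the splitting flow (Proposition \ref{lem:2-1}), and situations where the stochastic substep is not explicitly Gaussian, whereas your argument leans on the exact solvability of \eqref{sub_ex3} and on quartic-specific algebra (though the latter extends to any superquadratic $U$ via $\tfrac{\upsilon^2}{4}q^2-2U(q)\le C$). One cosmetic point: the constant $C$ in \eqref{equ:1} need not be bounded by $C_H$, so in the last step you should write $C_e\phi_n(|P_n|^2+|Q_n|^4)\le\phi_n(H_n+C_H)+\phi_n(C-C_H)_+$ and absorb the extra constant into $e^{C(T+1)}$; this is harmless since $\phi_n\le1$.
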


\begin{proof}
  Denote $\tilde{\mu}(p,q):=-\frac{\upsilon}{2}(p,q)^{\top}$ and $\tilde{\sigma}:=(\sigma,0)^{\top}$. 
  Then  we have
\begin{align*}
&\mathcal{D}H(p,q)\tilde{\mu}(p,q)=-\frac{\upsilon}{2}p^2-\frac{\upsilon}{2}q^4-\frac{\upsilon^2}{2}pq,\\
&tr(\mathcal{D}^2H(p,q)\tilde{\sigma}\tilde{\sigma}^T)=\sigma^2,\quad |\tilde{\sigma}^T\mathcal{D}H(p,q)|^2=|\sigma(p+\frac{\upsilon}{2}q)|^2.
\end{align*}
 This implies 
\begin{align*}
 \tilde{\mathcal E} :=&\,\mathcal{D}H(\tilde{P}_{T_n,t},\tilde{Q}_{T_n,t})\tilde{\mu}(\tilde{P}_{T_n,t},\tilde{Q}_{T_n,t})+\frac{tr(\mathcal{D}^2H(\tilde{P}_{T_n,t},\tilde{Q}_{T_n,t})\tilde\sigma\tilde\sigma^T)}{2}+\frac{|\tilde\sigma^T\mathcal{D}H(\tilde{P}_{T_n,t},\tilde{Q}_{T_n,t})|^2}{2}
  \\
  =&\, -\frac{\upsilon}{2}\big(|\tilde{P}_{T_n,t}|^2+\upsilon \tilde{P}_{T_n,t}\tilde{Q}_{T_n,t}+|\tilde{Q}_{T_n,t}|^4\big)+\frac{\sigma^2}{2e^{\beta t}}\big(|\tilde{P}_{T_n,t}|^2+\upsilon \tilde{P}_{T_n,t}\tilde{Q}_{T_n,t}+\frac{\upsilon^4}{4}|\tilde{Q}_{T_n,t}|^2\big)\\
  &+\frac{\sigma^2}{2}\leq -\upsilon H(\tilde{P}_{T_n,t},\tilde{Q}_{T_n,t})+\sigma^2 H(\tilde{P}_{T_n,t},\tilde{Q}_{T_n,t})+\frac{\sigma^2\upsilon^4}{64}+\frac{\sigma^2}{2}.
 \end{align*}
 By \cite[Lemma 2.5]{dang2024}, we obtain 
  \begin{align*}
   & \mathbb{E}\Big[\exp\Big\{\frac{H(\tilde{P}_{T_n,t},\tilde{Q}_{T_n,t})}{e^{\sigma^2(t-t_n)}}+\int_{t_n}^t\frac{\upsilon H(\tilde{P}_{T_n,r},\tilde{Q}_{T_n,r})-\frac{\sigma^2}{2}-\frac{\sigma^2\upsilon^4}{64}}{e^{\sigma^2(r-t_n)}}dr\Big\}\Big]\\&\leq \mathbb E\big[\exp\big\{H(\tilde{P}_{T_n,t_n},\tilde{Q}_{T_n,t_n})\big\}\big]
    =\mathbb E\big[\exp\big\{H(P_n,Q_n)\big\}\big],
\end{align*}
where in the last equality we use the preservation of the Hamiltonian for \eqref{eq:2-1}. 
By considering $e^{-\sigma^2t_n}H$ instead of $H$ and using \eqref{HCH}, we arrive at 
\begin{align*}
 \mathbb{E}\Big[\exp\Big\{\frac{H(\tilde{P}_{T_n,t},\tilde{Q}_{T_n,t})}{e^{\sigma^2t}}\Big\}\Big]&\leq \exp\Big\{\big(\upsilon C_H+\frac{\sigma^2}{2}+\frac{\sigma^2\upsilon^4}{64}\big)(t-t_n)\Big\}\mathbb E\Big[\exp\Big\{\frac{H(P_n,Q_n)}{e^{\sigma^2t_n}}\Big\}\Big].
\end{align*}
By iteration and combining \eqref{equ:1}, we complete the proof. 
\end{proof}

Similarly, one can also prove the uniform moment boundedness, ergodicity, and the exponential integrability of  the solution for the  splitting system \eqref{split}. For our convenience, we list these properties in the following  proposition  and omit the proof.
\begin{proposition}
\label{lem:2-1}
 It holds that 

(\romannumeral1) for any $ p\geq 1$, there exists a  constant $C:=C(p)>0$ such that  
\begin{align*}
\sup_{n\in \mathbb{N}}\sup_{t\in T_n}\mathbb{E}[(\bar P_{T_n}(t))^{2p}+(\bar Q_{T_n}(t))^{4p}+(\tilde P_{T_n}(t))^{2p}+(\tilde Q_{T_n}(t))^{4p}]\leq C(1+H^p(P_0,Q_0)).
\end{align*}
In addition, the solution of the splitting system \eqref{split} is uniquely ergodic; 

(\romannumeral2) there exists a   constant $C>0$ such that 
\begin{align*}
 \mathbb E\Big[\exp\Big\{\frac{H(P_{\tau}(t_n),Q_{\tau}(t_n))}{e^{\sigma^2t_n}}\Big\}\Big]\leq \exp(H(P_0,Q_0)+Ct_n).
 \end{align*}
\end{proposition}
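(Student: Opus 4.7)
The plan is to mirror the three arguments already developed in the section — Lemma \ref{lem:2} for moment bounds, Proposition \ref{thm:2} for ergodicity, and Proposition \ref{lem:3.1} for exponential integrability — with the simplification that the deterministic half-step $\Phi^D_{T_n}$ now preserves $H$ exactly rather than through a discrete conservative approximation, while the stochastic half-step $\Phi^S_{T_n}$ is the same linear SDE \eqref{sub_ex3} whose closed-form solution
\begin{align*}
\tilde P_{T_n}(t)=e^{-\upsilon(t-t_n)/2}\bar P_{T_n}(t_{n+1})+\sigma\int_{t_n}^t e^{-\upsilon(t-s)/2}dW_s,\qquad \tilde Q_{T_n}(t)=e^{-\upsilon(t-t_n)/2}\bar Q_{T_n}(t_{n+1})
\end{align*}
was already used in the numerical-scheme proofs. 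Consequently, most arithmetic transfers verbatim.

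For part (\romannumeral1), moment bounds, I would first plug the above formula into the $2p$-th and $4p$-th moments of $\tilde P_{T_n}(t)$ and $\tilde Q_{T_n}(t)$; the Gaussian integral term is bounded uniformly in $n,t$, so everything reduces to controlling $\mathbb E[H^p(\bar P_{T_n}(t_{n+1}),\bar Q_{T_n}(t_{n+1}))]$ via the equivalence \eqref{equ:1}. Since $\Phi^D_{T_n}$ preserves $H$ \emph{exactly}, this quantity equals $\mathbb E[H^p(P_\tau(t_n),Q_\tau(t_n))]$, so it suffices to establish a one-step Lyapunov-type recursion $\mathbb E[H^p(P_\tau(t_{n+1}),Q_\tau(t_{n+1}))\mid\mathcal F_{t_n}]\leq \gamma H^p(P_\tau(t_n),Q_\tau(t_n))+\beta_p$ with $\gamma\in(0,1)$. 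This is the same expansion as in Step 1 of Proposition \ref{thm:2}, extended to the $p$-th power by Young's inequality on the cross terms involving the Wiener increment, and it iterates to the claimed uniform bound. The bounds on $\bar P_{T_n}(t),\bar Q_{T_n}(t)$ for intermediate $t\in T_n$ then follow from the exact conservation of $H$ along $\Phi^D_{T_n}$ together with \eqref{equ:1}.

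For the ergodicity half of (\romannumeral1), I would again follow the two-step framework of \cite{Matting2002} used in Proposition \ref{thm:2}. The Lyapunov condition on $(H+C_H)^p$ is the recursion just described. For the minorization condition, the reachability argument is in fact cleaner than in the discrete case: given target $y^{*}=(y^{*}_1,y^{*}_2)^\top$ and start $y$, run the smooth bijection $\Phi^D_{\tau}$ on $y$, then choose $\Delta W_0$ so that $Q_{\tau}(t_1)=e^{-\upsilon\tau/2}\bar Q_{T_0}(t_1)$ makes $\bar Q_{T_1}(t_2)=e^{\upsilon\tau/2}y^{*}_2$ (this pins down $P_{\tau}(t_1)$ and hence $\Delta W_0$ uniquely through $\Phi^D_{\tau}$), and then choose $\Delta W_1$ to match the first coordinate; since the noise appears in $\tilde P$ only, exactly two steps are needed. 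This yields a continuous density for the two-step kernel, and Harris's argument closes the ergodicity as before.

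For (\romannumeral2), the proof of Proposition \ref{lem:3.1} carries over literally, because it only used (a) exact conservation of $H$ on the deterministic leg and (b) the explicit SDE \eqref{sub_ex3} on the stochastic leg. Applying \cite[Lem.\ 2.5]{dang2024} on each interval $T_n$ with the same estimate $\tilde{\mathcal E}\leq -\upsilon H(\tilde P,\tilde Q)+\sigma^2 H(\tilde P,\tilde Q)+\sigma^2\upsilon^4/64+\sigma^2/2$, replacing $H$ by $e^{-\sigma^2 t_n}H$ and iterating in $n$, produces the stated bound (with a harmless use of \eqref{HCH} to absorb constants). The main — and essentially only — obstacle is bookkeeping in the $p$-th-moment recursion for (\romannumeral1), specifically handling the cross terms $\bar P_{T_n}(t_{n+1})\int e^{-\upsilon(t_{n+1}-s)/2}\sigma\,dW_s$ and $\bar Q_{T_n}(t_{n+1})\int e^{-\upsilon(t_{n+1}-s)/2}\sigma\,dW_s$ after raising to the $p$-th power so that the resulting coefficient on $H^p(P_\tau(t_n),Q_\tau(t_n))$ remains strictly less than $1$; this is handled by Young's inequality with sufficiently small $\upsilon\tau$-dependent parameters, exactly as in Lemma \ref{lem:2}.
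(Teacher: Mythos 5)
Your proposal is correct and matches the paper's intent exactly: the paper omits this proof, stating only that it follows "similarly" to Lemma \ref{lem:2}, Proposition \ref{thm:2}, and Proposition \ref{lem:3.1}, and your adaptation — replacing discrete conservation of $H$ by exact conservation along $\Phi^D_{T_n}$ while reusing the It\^o/Lyapunov computation on \eqref{sub_ex3}, the two-step minorization, and the \cite[Lemma 2.5]{dang2024} iteration — is precisely that argument. The only point treated at the same (informal) level as the paper's own discrete version is the invertibility of the flow map needed to pin down $\Delta W_0$ in the minorization step, so no genuine gap relative to the paper's standard of rigor.
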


\section{Strong and weak convergence orders}\label{order} In this section, 
we first present that under  moderate verifiable conditions on the one-step approximation \eqref{eq:2-1}, the splitting method \eqref{SAVF} converges to the exact solution with the  strong convergence order of $1$. The proof is also given based on the decomposition of the error, and the utilization  of the exponential integrability and the stochastic Gr\"onwall inequality. 
Then based on the Kolmogorov equation and the uniform  moment boundedness, we  prove  the first-order weak convergence for approximating the ergodic limit via the numerical ergodic limit.
The main result on the strong convergence order is stated as follows.
\begin{theorem}
\label{thm:1}
Suppose that the one-step approximation  \eqref{eq:2-1} satisfies 
\begin{align}
&\big|\frac{\upsilon}{2}P_m+Q^3_m+\tau^{-1}\mathcal A^m_{\tau}\big|\leq C\big(|\bar P_{T_m,t_{m+1}}-P_m|^a+|\bar Q_{T_m,t_{m+1}}-Q_m|^b\big)\Theta_1^m,\label{cond1}\\
&\big|P_m+\frac{\upsilon}{2}Q_m-\tau^{-1}\mathcal B^m_{\tau}\big|\leq C\big(|\bar P_{T_m,t_{m+1}}-P_m|^a+|\bar Q_{T_m,t_{m+1}}-Q_m|^b\big)\Theta_2^m\label{cond2}
\end{align}
 for constants  $C>0,a\wedge b\ge 1,$ where maps $\Theta_1^m:=\Theta_1(P_m,Q_m,\bar P_{T_m,t_{m+1}},\bar Q_{T_m,t_{m+1}})$ and  $\Theta_2^m:=\Theta_2(P_m,Q_m,\bar P_{T_m,t_{m+1}},\bar Q_{T_m,t_{m+1}})$ are of polynomial growth. In addition let \begin{align}\label{cond3}\sup_{t_n\in[0,T]}(\|\tau^{-1}\mathcal A^n_{\tau}\|_{L^{6pa}(\Omega)}+\|\tau^{-1}\mathcal B^n_{\tau}\|_{L^{6pb}(\Omega)})\leq C
 \end{align} for $p>0.$ 
Then for $ T>0$, there exist $\tilde{\tau}_0:=\tilde{\tau}_0(T)\in(0,1)$ and  $C:=C(T)>0,$ such that for $\tau\in(0,\tilde{\tau}_0)$ and $p>0,$ 
\begin{align*}
\sup_{t_m\in[0,T]}\|(P(t_m),Q(t_m))^{\top}-(P_m,Q_m)^{\top}\|_{L^{2p}(\Omega)}\leq C\tau.
\end{align*}
\end{theorem}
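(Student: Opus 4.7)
The plan is to decompose the error through the intermediate exact splitting solution $X_\tau$ defined in \eqref{split} and reduce the analysis to two manageable pieces: a Lie--Trotter splitting error that compares the exact flow of \eqref{eq:0} with the composition $\Phi^S\circ\Phi^D$ of the two subsystem flows, and a one-step quadrature error that compares the exact Hamiltonian flow $\Phi^D$ with the conservative approximation $\Upsilon^D_\tau$. Writing
\begin{align*}
\|X(t_m)-X_m\|_{L^{2p}(\Omega)}\leq \|X(t_m)-X_\tau(t_m)\|_{L^{2p}(\Omega)}+\|X_\tau(t_m)-X_m\|_{L^{2p}(\Omega)},
\end{align*}
the first term is handled by classical splitting arguments (It\^o--Taylor expansions of the two generators with remainders given by commutators), while the second is the genuinely new contribution requiring the consistency conditions \eqref{cond1}--\eqref{cond3}.

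For the splitting error, I would Taylor-expand the one-step evolution $e^{\tau\mathcal L}$ of \eqref{eq:0} and the splitting operator $e^{\tau(\mathcal L_1+\mathcal L_2+\frac12(\mathcal L_3-\mathcal L_3'))}e^{\tau(\frac12(\mathcal L_3+\mathcal L_3')+\mathcal L_4)}$ to second order in $\tau$: the zeroth- and first-order contributions cancel by $\mathcal L=(\mathcal L_1+\mathcal L_2+\frac12(\mathcal L_3-\mathcal L_3'))+(\frac12(\mathcal L_3+\mathcal L_3')+\mathcal L_4)$, leaving a remainder whose coefficients are polynomials in the exact and splitting solutions. The uniform moment bounds of Proposition \ref{lem:2-1}(i) and the corresponding bounds \eqref{bound2} for the exact flow then give local error $O(\tau^2)$ in $L^{2p}$ and, after summation, global error $O(\tau)$.

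For the quadrature error, set $\delta_n:=X_\tau(t_n)-X_n$. Since $X_\tau(t_{n+1})$ and $X_{n+1}$ are obtained by composing the \emph{same} stochastic flow $\Phi^S$ with $\Phi^D_{T_n}$ and $\Upsilon^D_\tau$ respectively, and $\Phi^S$ acts affinely with contraction factor $e^{-\upsilon\tau/2}$ in the $P$-component and independent Gaussian additive noise, it suffices to bound $\|\Phi^D_{T_n}(X_\tau(t_n))-\Upsilon^D_\tau(X_n)\|_{L^{2p}(\Omega)}$. I would split this into a stability part $\Phi^D_{T_n}(X_\tau(t_n))-\Phi^D_{T_n}(X_n)$, controlled by a one-step Lipschitz bound of the Hamiltonian flow with constant depending polynomially on $H$ evaluated at the endpoints, and a local consistency part $\Phi^D_{T_n}(X_n)-\Upsilon^D_\tau(X_n)$. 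Integrating \eqref{sub_ex1} and using \eqref{cond1}--\eqref{cond2} with $a,b\ge 1$, together with \eqref{cond3} (which yields $\|\bar P_{T_n,t_{n+1}}-P_n\|_{L^q}+\|\bar Q_{T_n,t_{n+1}}-Q_n\|_{L^q}\lesssim\tau$), a H\"older estimate against the polynomial-growth factors $\Theta_1^n,\Theta_2^n$ produces a local consistency error of order $\tau^2$ in $L^{2p}$.

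The main obstacle lies in gluing the per-step estimates into a global bound in the presence of the superquadratic drift $-q^3$: the Lipschitz constant of the drift blows up with $|Q|$, so deterministic Gr\"onwall is unavailable. I would set up a recursion for $\|\delta_n\|_{L^{2p}}$ in which the amplification factors are polynomials in $H(P_n,Q_n)$ and $H(P(t_n),Q(t_n))$, and then invoke a stochastic Gr\"onwall inequality whose applicability rests precisely on the matched exponential integrability \eqref{exp_H} and Proposition \ref{lem:3.1}, ensuring that these polynomial amplification factors, although unbounded in $L^\infty$, have finite $L^q$-moments uniformly on $[0,T]$ for every $q\ge 1$. Choosing $\tilde\tau_0$ so that the discrete Gr\"onwall factor stays bounded on $[0,T]$ then delivers the stated first-order strong rate.
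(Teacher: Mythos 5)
Your overall architecture matches the paper's: the error is decomposed through the exact splitting solution $X_\tau$ of \eqref{split}, the discretization error is controlled via the consistency conditions \eqref{cond1}--\eqref{cond3}, and the global estimate is closed with a stochastic Gr\"onwall inequality supported by exponential integrability. However, there is a genuine gap in how you propose to close that last step. You write that the stochastic Gr\"onwall inequality applies because exponential integrability ensures the polynomial amplification factors ``have finite $L^q$-moments uniformly on $[0,T]$ for every $q\ge1$.'' Finite $L^q$-moments of the amplification factors already follow from the ordinary moment bounds (Lemma \ref{lem:2}, Proposition \ref{lem:2-1}(i)) and are \emph{not} sufficient: the stochastic Gr\"onwall inequality used here requires a bound on $\|\exp\{C\int_0^T(1+c_\tau(|\bar Q_{T_{n_s}}(s)|^4+|Q(s)|^4))\,ds\}\|_{L^{p_1}}$, i.e.\ on an \emph{exponential} moment of the time-integrated quartic terms. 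Moreover, the exponential integrability available (from \eqref{exp_H}, Proposition \ref{lem:3.1} and Proposition \ref{lem:2-1}(ii)) holds only for one specific, time-decaying exponent $C_e e^{-\sigma^2 T}$, not for an arbitrary constant $C$. The argument therefore only closes because the quartic terms in the per-step recursion can be arranged (via Young's inequality) to carry a small prefactor $\tau+\epsilon$, after which $\tilde\tau_0(T)$ and $\epsilon$ are chosen so that $CTp_1(\tilde\tau_0+\tilde\epsilon_0)\le C_e e^{-\sigma^2 T}$. Your recursion, with $O(1)$ polynomial amplification factors in $H$, would demand an exponential moment that is simply not available, so this quantitative matching is an essential missing ingredient, not a technicality.

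A secondary issue concerns your treatment of the splitting error $X(t_m)-X_\tau(t_m)$ by an operator Taylor/commutator expansion with ``local $O(\tau^2)$ error, then summation.'' For strong $L^{2p}$ error with the superquadratic drift $-q^3$, local errors cannot simply be summed: their propagation involves the random, unbounded local Lipschitz constant of the cubic term, and the martingale structure of the terms $\int_{t_m}^{t_{m+1}} e(t_m)\sigma(W_s-W_{t_m})\,ds$ must be exploited to avoid losing half an order. The paper instead works pathwise with integral representations of $P(t)-\tilde P_{T_m}(t)$ and $Q(t)-\tilde Q_{T_m}(t)$, expands the squared error, isolates the martingale increments, and feeds the resulting pathwise recursion into the same stochastic Gr\"onwall machinery (with the same smallness requirement on the exponent) as in the discretization step. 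If you route the splitting error through your final-paragraph recursion as well, you again run into the quantitative issue above; as written, ``after summation, global error $O(\tau)$'' is not justified.
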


Below we present the  convergence order  between numerical and exact ergodic limits.  
\begin{theorem}
\label{thm:3} Under conditions in Theorem \ref{thm:1},  
for  measurable functions 
$ g\in \mathcal{G}_l=\{g:|g(x)-g(y)|\leq C(1+|x|^{2l-1}+|y|^{2l-1})|x-y|,|g(x)|\leq (H(x)+C_H)^l\}$, 
we have 
$
|\pi(g)-\pi_{\tau}(g)|\leq C\tau,
$
where $C>0$ is time-independent. 
\end{theorem}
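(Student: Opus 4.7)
The plan is to combine a Kolmogorov-equation / telescoping-sum (Talay--Tubaro type) argument with the exponential ergodicity of both the exact flow and the numerical scheme, in order to produce a weak error of order $\tau$ that is independent of the horizon. Set $v(t,x):=\mathbb E[g(X^x(t))]$, where $X^x$ denotes the exact solution of \eqref{eq:0} starting from $x$, so that $v$ is the Markov semigroup of \eqref{eq:0} applied to $g$. By the ergodicity of the exact flow recalled in Section \ref{s0}, one has an exponential bound $|v(t,x)-\pi(g)|\leq Ce^{-\lambda t}(1+H^l(x))$ for some $\lambda>0$. I would first split
\begin{align*}
|\pi(g)-\pi_\tau(g)| \leq |\pi(g)-v(t_N,X_0)| + |v(t_N,X_0)-\mathbb E[g(X_N)]| + |\mathbb E[g(X_N)]-\pi_\tau(g)|
\end{align*}
for arbitrary $N\in\mathbb N$, observing that the first term is $\mathcal O(e^{-\lambda t_N})$, while Proposition \ref{thm:2} applied with $p=l$ gives that the third is $\mathcal O(\kappa^N)$. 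Letting $N\to\infty$, it therefore suffices to bound the middle term by $C\tau$ uniformly in $N$.

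For the middle term I would telescope along the numerical trajectory,
\begin{align*}
v(t_N,X_0)-\mathbb E[g(X_N)] = \sum_{k=0}^{N-1}\mathbb E\bigl[v(t_N-t_k,X_k)-v(t_N-t_{k+1},X_{k+1})\bigr],
\end{align*}
and use the semigroup identity $v(t_N-t_k,X_k) = \mathbb E[v(t_N-t_{k+1},X^{X_k}(t_{k+1}))\mid\mathcal F_{t_k}]$ to recast each summand as a local weak error $\mathbb E[u(X^{X_k}(t_{k+1}))-u(X_{k+1})]$ with $u:=v(t_N-t_{k+1},\cdot)$. Taylor expansion of $u$ around $X_k$, combined with It\^o's formula on the exact flow and the splitting structure \eqref{SAVF} (the Ornstein--Uhlenbeck part \eqref{sub_ex3} is solved exactly, so only the conservative one-step map \eqref{eq:2-1} contributes to the local error), together with the consistency estimates \eqref{cond1}--\eqref{cond3} and the uniform moment bounds of Lemma \ref{lem:2}, yields a local estimate of the form
\begin{align*}
\bigl|\mathbb E[u(X^{X_k}(t_{k+1}))-u(X_{k+1})]\bigr| \leq C\tau^2 \sum_{|\alpha|\leq m}\mathbb E\bigl[|D^\alpha u(X_k)|\,\Xi(X_k)\bigr]
\end{align*}
for some polynomially growing weight $\Xi$.

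The final step is to sum these local errors by exploiting the temporal decay of the spatial derivatives of $v$. Using regularity estimates for the Kolmogorov equation of \eqref{eq:0} in the spirit of \cite{Talay2002}, one obtains a polynomially weighted bound of the type $\mathbb E[|D^\alpha v(t,X_k)|\,\Xi(X_k)]\leq Ce^{-\lambda t}(1+H^l(X_0))$, whence
\begin{align*}
\bigl|v(t_N,X_0)-\mathbb E[g(X_N)]\bigr| \leq C\tau^2\sum_{k=0}^{N-1}e^{-\lambda(t_N-t_{k+1})} \leq \frac{C\tau^2}{1-e^{-\lambda\tau}} \leq C\tau,
\end{align*}
uniformly in $N$, which yields the claim. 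The main obstacle I anticipate is precisely this last ingredient: establishing the exponential-in-time decay of the spatial derivatives of $v$ in the hypoelliptic, superquadratic-potential setting of \eqref{eq:0}. The degeneracy of the noise in the $Q$-direction rules out classical parabolic regularity, so one must instead analyse the variational (first- and higher-order Malliavin) processes inductively and combine these with the hypocoercive Lyapunov structure already exploited in Section \ref{scheme} to propagate polynomial weights along the derivative estimates.
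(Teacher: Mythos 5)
Your argument is correct in substance and rests on the same two pillars as the paper's proof --- the Kolmogorov equation for $u(t,\cdot)=\mathbb E_{(\cdot)}[g(P(t),Q(t))]-\pi(g)$ together with the exponential, polynomially weighted decay of its spatial derivatives from \cite[Theorem 3.1]{Talay2002}, and the local second-order weak consistency supplied by \eqref{cond1}--\eqref{cond3} and Lemma \ref{lem:2} --- but the global assembly is genuinely different. You fix a terminal time $t_N$, split off $|\pi(g)-v(t_N,X_0)|$ and $|\mathbb E[g(X_N)]-\pi_\tau(g)|$ (controlled by the exponential ergodicity of the exact flow and by Proposition \ref{thm:2} with $p=l$, respectively), and telescope the remaining term along the numerical trajectory via the Markov semigroup identity, summing the local errors against the decaying weights $e^{-\lambda(t_N-t_{k+1})}$. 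The paper instead follows Talay's original Ces\`aro route: it compares $\mathbb E[u(j\tau,X_{n+1})]$ with $\mathbb E[u((j+1)\tau,X_n)]$, which forces an additional Taylor expansion of $u$ in its \emph{time} variable (producing the $\mathcal L^2u$ term and the remainder $\tilde r_{j+1,n}$), averages $\frac1N\sum_{n=1}^N\mathbb E[u(0,X_n)]$, and concludes by the ergodic theorems for the exact and numerical dynamics. Your decomposition avoids the time expansion entirely and yields the slightly stronger, $N$-uniform statement $|\mathbb E[g(X_N)]-\pi(g)|\le C(\tau+e^{-\lambda t_N}+\kappa^N)$, at the price of needing the geometric convergence rate of the chain rather than mere Ces\`aro convergence --- a rate which Proposition \ref{thm:2} provides anyway, so nothing is lost. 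One remark on your anticipated ``main obstacle'': the exponential-in-time decay of $D^\alpha u(t,\cdot)$ with polynomial weights in the hypoelliptic, superquadratic setting does not need to be re-derived via Malliavin variational processes; it is exactly the content of \cite[Theorem 3.1]{Talay2002}, which the paper invokes for the same purpose, so your argument closes once that citation is made.
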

\begin{proof}
Introduce the function $u(t,p,q):=\mathbb E_{(p,q)}[g(P(t),Q(t))]-\pi(g)$, which is the solution of the Kolmogorov equation $\frac{\partial u(t,p,q)}{\partial t}=\mathcal Lu(t,p,q).$ Here, $\mathcal L$ is the infinitesimal generator of \eqref{eq:0} and $\mathbb E_{(p,q)}$ is the conditional expectation with respect to the initial $(p,q)$. 
We let 
$b_2(p,q)=(-\frac{\upsilon }{2}p,-\frac{\upsilon}{2}q)^{\top}.$ 
  It follows from \eqref{SAVF} and \eqref{eq:2-1} that 
\begin{align*}
X_{n+1}=X_n+(\mathcal A^n_{\tau},\mathcal B^n_{\tau})^{\top}+\int_{t_n}^{t_{n+1}}b_2(\tilde P_{T_n,r},\tilde Q_{T_n,r})dr+(\sigma\Delta W_n,0)^{\top}.
\end{align*} 
Then by the Taylor expansion, \eqref{eq:2-1}, and  \eqref{cond1}--\eqref{cond2}, one can derive that for $j\ge 0,n\ge 0,$
\begin{align}\label{inter1}
\mathbb E[u(j\tau,X_{n+1})]\leq \mathbb E[u(j\tau,X_n)]+\mathbb E[\mathcal L u(j\tau,X_n)]\tau+\mathcal C_{j,n+1}\tau+r_{j,n+1}\tau^2.
\end{align} 
Here, the function $\mathcal C_{j,n+1}$ is the   term of  type
$\mathbb E\big[\|Du(j\tau,X_n)\|\big(\tau \mathbb J^n_{1,1}+\int_{t_n}^{t_{n+1}}\mathbb J^n_{1,2}(r)dr\big)+\tau (\|D^2u(j\tau,X_n)\|+\|D^3u(j\tau,X_n+\theta(X_{n+1}-X_n))\|)\mathbb J_{1,3}^n\big],$  where $D^iu$ is the standard $i$th derivative of $u$,   $\theta\in(0,1),$ $\mathbb J^n_{1,1}:=\mathbb J_{1,1}(X_n,\bar P_{T_n,t_{n+1}},\bar Q_{T_n,t_{n+1}},\tau^{-1}\mathcal A^n_{\tau},\tau^{-1}\mathcal B^n_{\tau}),$ $\mathbb J^n_{1,2}(r):=\mathbb J_{1,2}(X_n,\tilde P_{T_n,r},\tilde Q_{T_n,r})$, and $\mathbb J^n_{1,3}:=\mathbb J_{1,3}(X_n)$ are  functions with polynomial growth at infinity. The remainder term $r_{j,n+1}$ is the  term of type
$\mathbb E\big[ (\|D^2u(j\tau,X_n)\|+\|D^3u(j\tau,X_n+\theta(X_{n+1}-X_n))\|)\big(\tau \mathbb J^n_{2,1}+\int_{t_n}^{t_{n+1}}\mathbb J^n_{2,2}(r)dr \big)\big],$ where functions $\mathbb J^n_{2,1}:=\mathbb J_{2,1}(X_n,\bar P_{T_n,t_{n+1}},\bar Q_{T_n,t_{n+1}},\tau^{-1}\mathcal A^n_{\tau},\tau^{-1}\mathcal B^n_{\tau}),$ and $\mathbb J_{2,2}^n(r):=\mathbb J_{2,2}(X_n,\tilde P_{T_n,r},\tilde Q_{T_n,r})$ are of  polynomial growth at infinity.  According to \cite[Theorem 3.1]{Talay2002}, Lemma \ref{lem:2}, and \eqref{cond3}, we obtain that for some $q_i>0,i=0,1,2$ and $C>0,$
\begin{align*}
&\quad \sup_{n\ge 0}\sum_{j=0}^{\infty}|r_{j,n+1}|\\
&\leq C\big(1+\sup_{n\in\mathbb N}(\|X_n\|^{q_1}_{L^{q_0}(\Omega)}+\|\mathbb J_{1,1}^n\|_{L^{q_2}(\Omega)}+\sup_{r\in T_n}\|\mathbb J_{1,2}^n(r)\|_{L^{q_2}(\Omega)})\big)\tau\sum_{j=0}^{\infty}e^{-Cj\tau}\leq C.
\end{align*}

It is observed that the Taylor expansion gives 
{\small\begin{align}\label{inter2}
\mathbb E[u((j+1)\tau,X_n)]=\mathbb E[u(j\tau,X_n)]+\mathbb E[\mathcal Lu(j\tau,X_n)]\tau+\frac12\mathbb E[\mathcal L^2u(j\tau,X_n)]\tau^2+\tilde r_{j+1,n}.
\end{align}} 
Here, the term $\mathcal L^2u(j\tau,X_n)$ is a summation of terms of type $\partial ^iu(j\tau,X_n) \tilde {\mathbb J}_{1}(X_n),i=1,2,3,4,$ where $\partial^i$ is the $i$th partial derivatives with respect to $(p,q)$ and $\tilde {\mathbb J}_{1}$ is a polynomial.  And the 
remainder $\tilde r_{j+1,n}$ is a summation of terms of type 
$\int_{j\tau}^{(j+1)\tau}\int_0^1\partial ^iu(j\tau+\tilde{\theta}(t-j\tau),X_n)\tilde {\mathbb J}_{2}(X_n)(t-j\tau)^2d\tilde{\theta}dt,i=1,\ldots,6,$ where $\tilde{\mathbb J}_2$ is a polynomial.  


Combining \eqref{inter1} and \eqref{inter2} leads to 
 \begin{align*}
\mathbb E[u(j\tau,X_{n+1})]\leq \mathbb E[u((j+1)\tau,X_n)]+\tilde {\mathcal C}_{j,n+1}\tau+R_{j+1,n+1},
 \end{align*}
 where $ \tilde {\mathcal C}_{j,n+1}$ is the   summation  of  terms $\mathcal C_{j,n+1}$ and 
$\mathbb E\big[\tau \|D^iu(j\tau,X_n)\|\cdot|\tilde{\mathbb J}_{1}(X_n)|\big],i=1,\ldots,4,$ and $ R_{j+1,n+1}$ is a summation of terms $r_{j,n+1}\tau^2$ and $\int_{j\tau}^{(j+1)\tau}\int_0^1\mathbb E[|\tilde {\mathbb J}_2(X_n)|\cdot\|D^iu(j\tau+\tilde{\theta}(t-j\tau))\|](t-j\tau)^2d\tilde{\theta}dt,i=1,\ldots,6.$ 
 Therefore, we arrive at
 \begin{align*}
 &\quad \frac1N\sum_{n=1}^N\mathbb E[g(X_n)-\pi(g)]=\frac1N\sum_{n=1}^N\mathbb E[u(0,X_n)]\\
 &\leq \frac1N\sum_{n=1}^N\mathbb E\Big[u(n\tau,X_0)+\sum_{j=0}^{n-1}\tilde{\mathcal C}_{j,n-j}\tau+\sum_{j=0}^{n-1}R_{j+1,n-j}\Big].
 \end{align*}
 The  ergodicity of the exact solution implies $\frac1N\sum_{n=1}^N\mathbb E[u(n\tau,X_0)]\to0$ as $N\to\infty.$  From \cite[Theorem 3.1]{Talay2002}, Lemma \ref{lem:2}, and \eqref{cond3}, we obtain 
$
 \sup\limits_{N\ge 1}\frac1N\sum_{n=1}^N\sum_{j=0}^{n-1}\tilde{\mathcal C}_{j,n-j}\leq C\sum_{j=0}^{\infty}e^{-Cj\tau}\tau\leq C 
$ and $\sup\limits_{N\ge 1}\frac1N\sum_{n=1}^N\sum_{j=0}^{n-1}R_{j+1,n-j}\leq C\tau^3\sum_{j=0}^{\infty}e^{-Cj\tau}\leq C\tau^2.$ 
 Hence, by virtue of the ergodicity of the numerical solution, we derive that 
$
 |\pi(g)-\pi_{\tau}(g)|\leq C\tau.
$ The proof is finished. 
  \end{proof}

 \begin{proof}[Proof of Theorem \ref{thm:1}] The proof is divided into two steps based on the decomposition of the error. 
 
 \textit{Step 1. 
Show that for $T>0$, there exist $\tilde{\tau}_0:=\tilde{\tau}_0(T)\in(0,1)$ and  $C:=C(T)>0,$ such that for any $\tau\in(0,\tilde{\tau}_0)$ and $p>0,$ it holds
$
\sup_{t_m\in[0,T]}\|(P(t_m),Q(t_m))^{\top}-(P_{\tau}(t_m),Q_{\tau}(t_m))^{\top}\|_{L^{2p}(\Omega)}\leq C\tau.$}

Let  $e(t_{m}):=P(t_{m})-{P}_{\tau}(t_{m})$ and $\tilde{e}(t_m):=Q(t_m)-Q_{\tau}(t_m)$.
By \eqref{eq:0} and \eqref{split},  we have  
\small{
\begin{align}\label{equ1}
&P(t)-\tilde{P}_{T_m}(t)\notag\\
  =&\, e(t_m)+\int_{t_m}^{t_{m+1}}\big(\frac{\upsilon}{2}\bar{P}_{T_m}(s)+\bar{Q}_{T_m}^3(s)\big)ds+\int_{t_m}^t\big(\frac{\upsilon}{2}\tilde{P}_{T_m}(s)-\upsilon P(s)-Q^3(s)\big)ds. 
\end{align}}
This, together with the Taylor formula yields 
  \begin{align}\label{rela_e}
    |e(t_{m+1})|^2=&\,\Big|e(t_m)+\int_{t_m}^{t_{m+1}}\big(\frac{\upsilon}{2}\bar{P}_{T_m}(s)+\bar{Q}_{T_m}^3(s)\big)ds\Big|^2+\int_{t_m}^{t_{m+1}}(P(t)-\tilde{P}_{T_m}(t))\times\notag\\
    &\big(\upsilon \tilde{P}_{T_m}(t)-2\upsilon P(t)-2Q^3(t)\big)dt 
  =\, |e(t_m)|^2+J_m^1+J_m^2+J_m^3+J_m^{4},
 \end{align}
 where 
 \begin{align*}
 &J_m^1:=\int_{t_m}^{t_{m+1}}e(t_m)\big(\upsilon \bar{P}_{T_m}(s)+2\bar{Q}_{T_m}^3(s)+\upsilon \tilde{P}_{T_m}(s)-2\upsilon P(s)-2Q^3(s)\big)ds,\\
 &J_m^2:=\Big|\int_{t_m}^{t_{m+1}}\big(\frac{\upsilon}{2}\bar{P}_{T_m}(s)+\bar{Q}_{T_m}^3(s)\big)ds\Big|^2,\\
 &J^3_m:=\int_{t_m}^{t_{m+1}}\int_{t_m}^{t_{m+1}}\big(\frac{\upsilon}{2}\bar{P}_{T_m}(s)+\bar{Q}_{T_m}^3(s)\big)\big(\upsilon \tilde{P}_{T_m}(t)-2\upsilon
  P(t)-2Q^3(t)\big)dsdt,\\
  &J^4_m:=\int_{t_m}^{t_{m+1}}\int_{t_m}^t\big(\frac{\upsilon}{2}\tilde{P}_{T_m}(s)-\upsilon P(s)-Q^3(s)\big)\big(\upsilon \tilde{P}_{T_m}(t)-2\upsilon P(t)-2Q^3(t)\big)dsdt.
 \end{align*}
 For the term $J^1_m,$ it can be decomposed as  
\begin{align*}
    J_m^1=&\int_{t_m}^{t_{m+1}}\upsilon e(t_m)(\bar{P}_{T_m}(s)-P(s))ds+\int_{t_m}^{t_{m+1}}\upsilon e(t_m)(\tilde{P}_{T_m}(s)-P(s))ds\\
    &+2\int_{t_m}^{t_{m+1}}e(t_m)(\bar{Q}_{T_m}^3(s)-Q^3(s))ds  =:J_m^{1,1}+J_m^{1,2}+J_m^{1,3}.
\end{align*} 
Note that 
 \begin{align}
&\bar{P}_{T_m}(s)-P(s)=-e(t_m)+\int_{t_m}^s\big(\upsilon P(r)-\frac{\upsilon}{2} \bar{P}_{T_m}(r)-\bar{Q}_{T_m}^3(r)+Q^3(r)\big)dr-\sigma (W_s-W_{t_m}),\label{equP1}
    \\
&\bar{Q}_{T_m}(s)-Q(s)=-\tilde{e}(t_m)+\int_{t_m}^s\big(\bar{P}_{T_m}(r)-P(r)+\frac{\upsilon}{2}\bar{Q}_{T_m}(r)\big)dr.\label{equP2}
\end{align}
Combining \eqref{equ1} leads to that 
  \begin{align*}
J_m^{1,1}=&-\upsilon |e(t_m)|^2\tau+\upsilon\int_{t_m}^{t_{m+1}}\int_{t_m}^s e(t_m)\big(\upsilon
P(r)-\frac{\upsilon}{2}\bar{P}_{T_m}(r)\big)drds\\
&+\upsilon\int_{t_m}^{t_{m+1}}\int_{t_m}^se(t_m)\big(-\bar{Q}_{T_m}^3(r)+Q^3(r)\big)drds
-\upsilon\int_{t_m}^{t_{m+1}}e(t_m)\sigma (W_s-W_{t_m})ds,
\\
J_m^{1,2}=&-\upsilon
|e(t_m)|^2\tau+\upsilon\int_{t_m}^{t_{m+1}}\int_{t_m}^{t_{m+1}}e(t_m)\big(-\frac{\upsilon}{2}\bar{P}_{T_m}(r)-\bar{Q}_{T_m}^3(r)\big)drds\\
&+\upsilon\int_{t_m}^{t_{m+1}}\int_{t_m}^se(t_m)\big(-\frac{\upsilon}{2}\tilde{P}_{T_m}(r)+\upsilon P(r)+Q^3(r)\big)drds,
\end{align*}
and
\begin{align*}
  J_m^{1,3}=&\,2\int_{t_m}^{t_{m+1}}e(t_m)(\bar{Q}_{T_m}(s)-Q(s))\big(\bar{Q}_{T_m}^2(s)+\bar{Q}_{T_m}(s)Q(s)+Q^2(s)\big)ds
  \\
  =&-2e(t_m)\tilde{e}(t_m)\int_{t_m}^{t_{m+1}}\big(\bar{Q}_{T_m}^2(s)+\bar{Q}_{T_m}(s)Q(s)+Q^2(s)\big)ds
  \\
  &+2\int_{t_m}^{t_{m+1}}\int_{t_m}^se(t_m)\big(\bar{P}_{T_m}(r)-P(r)+\frac{\upsilon}{2}\bar{Q}_{T_m}(r)\big)\big(\bar{Q}_{T_m}^2(s)+\bar{Q}_{T_m}(s)Q(s)+Q^2(s)\big)drds.
\end{align*}
Hence, by the H\"older inequality and the Young inequality, we obtain 
\begin{align}\label{inequPJ1}
J^1_m&\leq -\frac{\upsilon\tau}{2}|e(t_m)|^2+C(|e(t_m)|^2+|\tilde e(t_m)|^2)\int_{t_m}^{t_{m+1}}(|Q(s)|^2+|\bar Q_{T_m}(s)|^2)ds\notag\\
&\quad+
C\tau^2\int_{t_m}^{t_{m+1}}\big(|P(s)|^6+|\bar P_{T_m}(s)|^6+|\tilde P_{T_m}(s)|^2+|Q(s)|^6+|\bar Q_{T_m}(s)|^6+1\big)ds\notag\\
&\quad -\upsilon\int_{t_m}^{t_{m+1}}e(t_m)\sigma (W_s-W_{t_m})ds.
\end{align} 
For terms $J_m^2, J_m^3, J_m^4,$
it follows from   $(a+b)^2=a^2+2ab+b^2,a,b\in\mathbb R$ that 
\begin{align*}
  &\quad J_m^2+J_m^3+J_m^4
    \\
&=\Big|\frac{\upsilon}{2}\int_{t_m}^{t_{m+1}}(\bar{P}_{T_m}(t)-P(t))dt+\frac{\upsilon}{2}\int_{t_m}^{t_{m+1}}(\tilde{P}_{T_m}(t)-P(t))dt+\int_{t_m}^{t_{m+1}}(\bar{Q}_{T_m}^3(t)-Q^3(t))dt\Big|^2
\\
&\quad -\int_{t_m}^{t_{m+1}}\int_t^{t_{m+1}}\big(\frac{\upsilon}{2}\tilde{P}_{T_m}(s)-\upsilon
P(s)-Q^3(s)\big)\big(\frac{\upsilon}{2}\tilde{P}_{T_m}(t)-\upsilon P(t)-Q^3(t)\big)dsdt
\\
&\quad +\int_{t_m}^{t_{m+1}}\int_{t_m}^t\big(\frac{\upsilon}{2}\tilde{P}_{T_m}(s)-\upsilon P(s)-Q^3(s)\big)\big(\frac{\upsilon}{2}\tilde{P}_{T_m}(t)-\upsilon P(t)-Q^3(t)\big)dsdt
\\
&=\Big|\frac{\upsilon}{2}\int_{t_m}^{t_{m+1}}(\bar{P}_{T_m}(t)-P(t))dt+\frac{\upsilon}{2}\int_{t_m}^{t_{m+1}}(\tilde{P}_{T_m}(t)-P(t))dt+\int_{t_m}^{t_{m+1}}(\bar{Q}_{T_m}^3(t)-Q^3(t))dt\Big|^2,
\end{align*}
where the second equality uses the integral transformation.  Using \eqref{equ1}, \eqref{equP1},  \eqref{equP2}, the H\"older inequality, and the Young inequality,  
we derive that 
\begin{align}\label{inequP}
J^2_m+J^3_m+J^4_m&\leq C\Big[\tau^2|e(t_m)|^2+C\tau|\tilde e(t_m)|^2\int_{t_m}^{t_{m+1}}(|\bar Q_{T_m}(s)|^4+|Q(s)|^4)ds+\tau^3\int_{t_m}^{t_{m+1}}\big(|P(s)|^6\notag\\
& +|\tilde P_{T_m}(s)|^2+|\bar P_{T_m}(s)|^6+|\bar Q_{T_m}(s)|^6+|Q(s)|^6\big)ds+\tau \int_{t_m}^{t_{m+1}}|\sigma (W_t-W_{t_m})|^2dt\Big].
\end{align}

By the Taylor formula and 
\begin{align*}
Q(t)- \tilde{Q}_{T_m}(t)
  =\tilde{e}(t_m)-\int_{t_m}^{t_{m+1}}\big(\bar P_{T_m}(s)+\frac{\upsilon}{2}\bar{Q}_{T_m}(s)\big)ds+\int_{t_m}^t\big(P(s)+\frac{\upsilon}{2}\tilde{Q}_{T_m}(s)\big)ds, 
\end{align*}
we derive 
  \begin{align}\label{tilde_e}
    |\tilde{e}(t_{m+1})|^2=&\,\Big|\tilde{e}(t_m)-\int_{t_m}^{t_{m+1}}\big(\bar{P}_{T_m}(s)+\frac{\upsilon}{2}\bar{Q}_{T_m}(s)\big)ds\Big|^2+\int_{t_m}^{t_{m+1}}(Q(t)-\tilde{Q}_{T_m}(t))\times \notag\\
    &(2P(t)+\upsilon \tilde{Q}_{T_m}(t))dt=\, |\tilde{e}(t_m)|^2+K_m^1+K_m^2+K_m^3+K_m^4,
\end{align}
where 
\begin{align*}
&K_m^1:=2\int_{t_m}^{t_{m+1}}\tilde{e}(t_m)(P(t)-\bar{P}_{T_m}(t))dt+\upsilon\int_{t_m}^{t_{m+1}}\tilde{e}(t_m)(\tilde{Q}_{T_m}(t)-\bar{Q}_{T_m}(t))dt,\\
&K_m^2:=\Big|\int_{t_m}^{t_{m+1}}\big(\bar{P}_{T_m}(s)+\frac{\upsilon}{2}\bar{Q}_{T_m}(s)\big)ds\Big|^2,\\
&K_m^3:=-\int_{t_m}^{t_{m+1}}\int_{t_m}^{t_{m+1}}\big(\bar{P}_{T_m}(s)+\frac{\upsilon}{2}\bar{Q}_{T_m}(s)\big)(2P(t)+\upsilon \tilde{Q}_{T_m}(t))dsdt,\\
&K_m^4:=\int_{t_m}^{t_{m+1}}\int_{t_m}^t\big(P(s)+\frac{\upsilon}{2}\tilde{Q}_{T_m}(s)\big)\big(2P(t)+\upsilon \tilde{Q}_{T_m}(t)\big)dsdt.
\end{align*}
For the term $K_m^1$, it follows from \eqref{equP1},  
\begin{align}\label{equQ1}
\tilde Q_{T_m}(t)-\bar Q_{T_m}(t)=-\int_{t_m}^t\frac{\upsilon}{2}\tilde Q_{T_m}(r)dr+\int_t^{t_{m+1}}\big(\bar P_{T_m}(r)+\frac{\upsilon}{2}\bar Q_{T_m}(r)\big)dr,
\end{align}
and the H\"older inequality that 
\begin{align}\label{eq:2-5}
  K_m^1 
 &\leq   C\tau(|e(t_m)|^2+|\tilde{e}(t_m)|^2)+2\int_{t_m}^{t_{m+1}}\tilde{e}(t_m)\sigma(W_t-W_{t_m})dt\notag\\
 &\quad +C\tau^2\int_{t_m}^{t_{m+1}}(|P(s)|^2+|\bar P_{T_m}(s)|^2+|Q(s)|^6+|\bar Q_{T_m}(s)|^6+|\tilde Q_{T_m}(s)|^2+1)ds.  
\end{align}
For terms $K_m^i,i=2,3,4$, by \eqref{equP1}, \eqref{equQ1}, and the integral transformation, we have 
\begin{align}\label{inequQ1}
&K_m^2+K_m^3+K_m^4
=\,\Big|\int_{t_m}^{t_{m+1}}(\bar{P}_{T_m}(s)-P(s))ds+\frac{\upsilon}{2}\int_{t_m}^{t_{m+1}}(\bar{Q}_{T_m}(s)-\tilde{Q}_{T_m}(s))ds\Big|^2\notag\\
\leq&\,  C\tau^2|e(t_m)|^2+C\tau^3\int_{t_m}^{t_{m+1}}\big(|\tilde {P}_{T_m}(s)|^2+|P(s)|^2+|\bar{P}_{T_m}(s)|^2+|Q(s)|^6+|\bar{Q}_{T_m}(s)|^6\big)ds
\notag\\
&
+C\tau\int_{t_m}^{t_{m+1}}|\sigma (W_s-W_{t_m})|^2ds. 
\end{align}

Hence, adding  \eqref{rela_e} and \eqref{tilde_e}, and combining \eqref{inequPJ1}, \eqref{inequP}, \eqref{eq:2-5}, \eqref{inequQ1},  and the Young inequality, 
 we arrive at that for $\epsilon\in(0,1),$
\begin{align*}
  &|e(t_{m+1})|^2+|\tilde{e}(t_{m+1})|^2 \leq (|e(t_m)|^2+|\tilde{e}(t_m)|^2) \Big(1+C\int_{t_m}^{t_{m+1}}\big(1+(\tau+\epsilon)\times\\
  &(|\bar{Q}_{T_m}(s)|^4+|Q(s)|^4)\big)ds\Big) + C\tau^2\int_{t_m}^{t_{m+1}}\Gamma_m(s)    ds+C\tau\int_{t_m}^{t_{m+1}}|W_s-W_{t_m}|^2ds\\
    & +\sigma(2\tilde{e}(t_{m})-\upsilon e(t_m)) \int_{t_m}^{t_{m+1}}(t_{m+1}-s)dW_s,
  \end{align*}
  where $\Gamma_m(s):=\Gamma(P(s),Q(s),\bar{P}_{T_m}(s),\bar{Q}_{T_m}(s),\tilde{P}_{T_m}(s),\tilde{Q}_{T_m}(s)),s\in T_m$ is a polynomial with order no larger than six, and the Fubini theorem is also used to obtain $$M_m:=\sigma(2\tilde{e}(t_{m})-\upsilon e(t_m))\int_{t_m}^{t_{m+1}}(W_s-W_{t_m})ds=\sigma(2\tilde{e}(t_{m})-\upsilon e(t_m))\int_{t_m}^{t_{m+1}}(t_{m+1}-s)dW_s.$$ It follows that  $\{M_j\}_{j\in\mathbb N}$ is a martingale with $M_0=0.$ 
By iteration, we have
\begin{align*}
 &  |e(t_{m+1})|^2+|\tilde{e}(t_{m+1})|^2\leq
C\sum\limits_{j=0}^m(|e(t_j)|^2+|\tilde{e}(t_j)|^2)\int_{t_j}^{t_{j+1}}\Big(1+(\tau+\epsilon)(|\bar{Q}_{T_j}(s)|^4\\
&+|Q(s)|^4)\Big)ds
+C\tau^2\sum\limits_{j=0}^m\int_{t_j}^{t_{j+1}}\Gamma_j(s)ds+C\tau\sum\limits_{j=0}^m\int_{t_j}^{t_{j+1}}|W_s-W_{t_j}|^2ds +\sum\limits_{j=0}^mM_j.
\end{align*}
For $t\in T_j,$ denote $\lfloor t\rfloor:=t_j$, $n_{t}:=j$, and $\mathcal Y_{\lfloor t\rfloor}:=|e(t_j)|^2+|\tilde e(t_j)|^2$. Then we have \begin{align*}
\mathcal Y_{\lfloor t\rfloor}&\leq \int_0^{\lfloor t\rfloor}\Big(C\mathcal Y_{\lfloor s\rfloor}(1+(\tau+\epsilon)(|\bar Q_{T_{n_s}}(s)|^4+|Q(s)|^4))+C\tau^2\Gamma_{n_{s}}(s) +C\tau |W_s-W_{\lfloor s\rfloor}|^2\Big)ds\\
&\quad+\int_0^{\lfloor t\rfloor}\sigma(2\tilde e(\lfloor s\rfloor)-\upsilon e(\lfloor s\rfloor))(\lfloor s\rfloor+\tau-s)dW_s=:\int_0^{\lfloor t\rfloor}a_sds+\int_0^{\lfloor t\rfloor}b_sdW_s.
\end{align*}
 To obtain the desired strong convergence order, we need to utilize the   stochastic Gr\"onwall  inequality (see \cite[Corollary 2.5]{SGI}). Now we verify the condition $(34)$ in \cite[Corollary 2.5]{SGI} as follows: for $p_0\ge 2,$
 \begin{align*}
 &\<\mathcal Y_{\lfloor t\rfloor},a_t\>+\frac12|b_t|^2+\frac{p_0-2}{2}\frac{|\<\mathcal Y_{\lfloor t\rfloor },b_t\>|^2}{|\mathcal Y_{\lfloor t\rfloor }|^2}\leq C(1+(\tau+\epsilon)(|\bar Q_{T_{n_t}}(t)|^4+|Q(t)|^4))|\mathcal Y_{\lfloor t\rfloor}|^2\\&+(C\tau^2\Gamma_{n_{t}}(t)+C\tau|W_t-W_{\lfloor t\rfloor}|^2+(\lfloor t\rfloor+\tau-t)^2)^2.
 \end{align*}
 Thus   combining Lemmas  \ref{lem:2-1} and \cite[Lemma 2.3, Corollary 2.2]{Talay2002}, we have 
that for $p>0$ and some $p_0,p_1>0,$
\begin{align*}
\|\mathcal Y_{t_{m+1}}\|_{L^p(\Omega)}&\leq \Big\|\exp\Big\{C\int_0^{T}(1+(\tau+\epsilon)(|\bar Q_{T_{n_s}}(s)|^4+|Q(s)|^4))ds\Big\}\Big\|_{L^{p_1}(\Omega)}\times\\
&\quad \Big(\int_0^{T}\Big\|\frac{C\tau^2\Gamma_{n_{s}}(s)+C\tau|W_s-W_{\lfloor s\rfloor}|^2+(\lfloor s\rfloor+\tau-s)^2}{\exp\{\int_0^s(1+(\tau+\epsilon)(|\bar Q_{T_{n_u}}(u)|^4+|Q(u)|^4))du\}}\Big\|^2_{L^{p_0}(\Omega)}ds\Big)^{\frac12}.
\end{align*}
By the convexity  of the exponential  function, we have
  \begin{align}\label{expo1}
    &\mathbb{E}\Big[\exp\Big\{Cp_1 \int_0^T\big(1+(\tau+\epsilon)(|\bar{Q}_{T_{n_s}}(s)|^4+|Q(s)|^4)\big)ds\Big\}\Big]
    \notag\\
    \leq&\, \frac{1}{T}\int_0^T\mathbb{E}\Big[\exp\Big(TCp_1\big(1+(\tau+\epsilon)(1+|\bar{Q}_{T_{n_s}}(s)|^4+|Q(s)|^4)\big)\Big)\Big]ds=:\mathcal I_0. 
\end{align}
 Based on \eqref{equ:1} and Lemma  \ref{lem:2-1}, there exist positive constants $\tilde{\tau}_0:=\tilde{\tau}_0(T)$ and $\tilde{\epsilon}_0:=\tilde{\epsilon}_0(T),$ such that 
 $CTp_1(\tilde\tau_0+\tilde\epsilon_0)\leq C_ee^{-\sigma^2T},$ where $C_e$ is given in \eqref{equ:1}.   Then 
for   $\tau\in (0,\tilde {\tau}_0)$ and $\epsilon\in(0,\tilde{\epsilon}_0)$, we have    $\mathcal I_0\leq C.$  This finishes the proof of \textit{Step 1}.   

\textit{Step 2. Show that 
 for any $ T>0$, there exist $\tilde\tau_0:=\tilde\tau_0(T)\in(0,1)$ and $C:=C(T)>0$ such that for any $\tau\in(0,\tilde\tau_0)$ and $p>0,$ it holds 
$
\sup\limits_{t_m\in[0,T]}\|(P_{\tau}(t_m),Q_{\tau}(t_m))^{\top}-(P_m,Q_m)^{\top}\|_{L^{2p}(\Omega)}\leq C\tau.
$}

Denote $e_1(t_m):=P_{\tau}(t_m)-P_m$ and $\tilde e_1(t_m):=Q_{\tau}(t_m)-Q_m.$ 
By \eqref{sub_ex3} and   \eqref{eq:2-1},   we have that for $t\in T_m,$
\begin{align*}
  &\quad \tilde P_{T_m}(t)-\tilde P_{T_m,t}\\&=e_1(t_m)-\int_{t_m}^{t_{m+1}} \big(\frac{\upsilon}{2}\bar{P}_{T_m}(s)+\bar{Q}_{T_m}^3(s)+\tau^{-1}\mathcal A^m_{\tau}\big)ds
 -\frac{\upsilon}{2}\int_{t_m}^t(\tilde{P}_{T_m}(s)-\tilde P_{T_m,s})ds.
\end{align*}
It follows from  the Taylor  formula that $
|\tilde P_{T_m}(t)-\tilde P_{T_m,t}|^2=|e_1(t_m)|^2+I_{m,1}+I_{m,2}+I_{m,3}
-\upsilon\int_{t_m}^t(\tilde P_{T_m}(s)-\tilde P_{T_m,s})^2ds,\;t\in T_m,
$
where $I_{m,1} :=e_1(t_m)\int_{t_m}^{t_{m+1}}-2\big(\bar{Q}_{T_m}^3(s)-Q_m^3\big) ds,$ and 
  \begin{align*}
I_{m,2}&:= e_1(t_m)\int_{t_m}^{t_{m+1}}\Big(-\upsilon(\bar{P}_{T_m}(s)-P_m)-\upsilon P_m
  -2Q_m^3-2\tau^{-1}\mathcal A^m_{\tau}\Big)ds,
  \\
  I_{m,3} &:= \Big|\int_{t_m}^{t_{m+1}}\Big(\frac{\upsilon}{2}(\bar{P}_{T_m}(s)-P_m)+\bar Q_{T_m}^3(s)-Q_m^3+\frac{\upsilon}{2}\bar P_m+Q^3_m+\tau^{-1}\mathcal A^m_{\tau}\Big)
  ds \Big|^2.
\end{align*}
From \eqref{sub_ex1}, we have
\begin{align}
&\bar{P}_{T_m}(s)-P_m
=-\frac{\upsilon}{2}\int_{t_m}^s\bar{P}_{T_m}(r)dr-\int_{t_m}^s\bar{Q}_{T_m}^3(r)dr+e_1(t_m),\label{eqbarP1}\\
  &\bar{Q}_{T_m}(s)-Q_m 
  =\int_{t_m}^s\bar{P}_{T_m}(r)dr+\frac{\upsilon}{2}\int_{t_m}^s\bar{Q}_{T_m}(r)dr+\tilde e_1(t_m). \label{eqbarQ1}
\end{align} 
This, together with \eqref{eq:2-1}, \eqref{cond1}, \eqref{eqbarP1}, \eqref{eqbarQ1}, and the H\"older inequality,  and the Young inequality  yields 
\begin{align*}
&I_{m,1}+I_{m,2}+I_{m,3}
   \leq C(|e_1(t_m)|^2+|\tilde e_1(t_m)|^2 )\int_{t_m}^{t_{m+1}}(1+|\bar{Q}_{T_m}(s)|^2+|Q_m|^2)ds\\
   &+C\tau^2\int_{t_m}^{t_{m+1}}\Big(1+|\bar{P}_{T_m}(r)|^6
  +|\bar{Q}_{T_m}(r)|^6+|Q_m|^6+\big(|\tau^{-1}\mathcal A^m_{\tau}|^{2a}+|\tau^{-1}\mathcal B^m_{\tau}|^{2b}\big)|\Theta_1^m|^2\Big)dr. 
\end{align*}

From \eqref{sub_ex1}  and \eqref{sub_ex3}, we obtain
  \begin{align*}
    & \tilde Q_{T_m}(t)-\tilde Q_{T_m,t}
    \\
    =&\,\tilde e_1(t_m)+\int_{t_m}^{t_{m+1}}\big(\bar{P}_{T_m}(s)+ \frac{\upsilon}{2}\bar{Q}_{T_m}(s)-\tau^{-1}\mathcal B^m_{\tau} \big)ds  -\frac{\upsilon}{2}\int_{t_m}^t(\tilde{Q}_{T_m}(s)-\tilde Q_{T_m,s})ds. 
\end{align*}
By virtue of the Taylor formula gives   $
     |\tilde Q_{T_m}(t)-\tilde Q_{T_m,t}|^2
 =|\tilde e_1(t_m)|^2+\hat{I}_{m,1}+\hat{I}_{m,2}+\hat{I}_{m,3}-\upsilon\int_{t_m}^t(\tilde Q_{T_m}(s)-\tilde Q_{T_m,s})^2ds,
$
where $\hat{I}_{m,1}:=\upsilon \tilde e_1(t_m)\int_{t_m}^{t_{m+1}}\big(\bar{Q}_{T_m}(s)-Q_m\big)ds,$ and 
\begin{align*}
  &\hat{I}_{m,2}:=
  2\tilde e_1(t_m)\int_{t_m}^{t_{m+1}}\big(\bar{P}_{T_m}(s)-P_m+P_m+\frac{\upsilon}{2}Q_m-\tau^{-1}\mathcal B^m_{\tau}\big)ds,\\
   &
\hat{I}_{m,3}:=\Big|\int_{t_m}^{t_{m+1}}\Big(\bar{P}_{T_m}(s)-P_m+\frac{\upsilon}{2}(\bar Q_{T_m}(s)-Q_m)+P_m+\frac{\upsilon}{2}Q_m-\tau^{-1}\mathcal B^m_{\tau}\Big)ds\Big|^2.
\end{align*}
Then combining  \eqref{cond2}, \eqref{eqbarP1},  and \eqref{eqbarQ1}, we have 
\begin{align*}
&\hat{I}_{m,1}+\hat{I}_{m,2}+\hat{I}_{m,3}\leq C\tau (|e_1(t_m)|^2+|\tilde e_1(t_m)|^2)\\&+ 
C\tau^2 \int_{t_m}^{t_{m+1}}\Big(1+|\bar{P}_{T_m}(r)|^2+|\bar{Q}_{T_m}(r)|^6+(|\tau^{-1}\mathcal A^m_{\tau}|^{2a}+|\tau^{-1}\mathcal B^m_{\tau}|^{2b})|\Theta_2^m|^2\Big)dr.
\end{align*}
Hence, utilizing the Young inequality, we arrive at that for $\epsilon\in(0,1), $
\begin{align*}
&|e_1(t_{m+1})|^2+|\tilde e_1(t_{m+1})|^2\leq (|e_1(t_{m})|^2+|\tilde e_1(t_{m})|^2)\times\\
&\Big(1+C\int_{t_m}^{t_{m+1}}\big(C(\epsilon)+(\epsilon+\tau)(|\bar Q_{T_m}(s)|^4 +|Q_m|^4)\big)ds\Big)+C\tau^2\int_{t_m}^{t_{m+1}}\tilde\Gamma_m(s)ds,
\end{align*}
where $\tilde \Gamma_m(s):=\tilde\Gamma(P_m,Q_m,\bar P_{T_m}(s),\bar Q_{T_m}(s),\tau^{-1}\mathcal A^m_{\tau},\tau^{-1}\mathcal B^m_{\tau}),s\in T_m$ is a polynomial. 
Then by iteration, we have
\begin{align*}
  &\quad |e_1(t_{m+1})|^2+|\tilde e_1(t_{m+1})|^2
  \leq C\sum\limits_{j=0}^m(|e_1(t_j)|^2+|\tilde e_1(t_j)|^2)\times\\
  &\int_{t_j}^{t_{j+1}}\big(1+(\tau+\epsilon)(|\bar{Q}_{T_j}(s)|^4 +|Q_j|^4)\big)ds+C\tau^2\sum\limits_{j=0}^m\int_{t_j}^{t_{j+1}}\tilde \Gamma_j(s)ds.
\end{align*}
By the discrete Gr\"onwall inequality and taking $p$th moment,  we obtain 
\begin{align*}
  &\||e_1(t_{m+1})|^2+|\tilde e_1(t_{m+1})|^2\|_{L^p(\Omega)}
  \leq C\tau^2\Big\|\int_0^T\tilde\Gamma_{n_s}(s)ds\Big\|_{L^{2p}(\Omega)}\times\\
  &\Big\|\exp\Big\{C\int_0^T \big(1+(\epsilon+\tau)(|\bar{Q}_{T_{n_s}}(s)|^4+|Q_{n_s}|^4)ds\big)\Big\}\Big\|_{L^{2p}(\Omega)}. 
\end{align*}
By Lemma \ref{lem:2-1} (\romannumeral2) and the preservation of the Hamiltonian for \eqref{sub_ex1}, we also have 
\begin{align*}\mathbb E\Big[\exp\Big\{\frac{H(\bar P_{T_j}(s),\bar Q_{T_j}(s))}{e^{\sigma^2 t_j}}\Big\}\Big]=\mathbb E\Big[\exp\Big\{\frac{H(P_{\tau}(t_j),Q_{\tau}(t_j))}{e^{\sigma^2 t_j}}\Big\}\Big]\leq \exp\{H(P_0,Q_0)+Ct_j\}.\end{align*}
Then similar to the proof of \eqref{expo1}, using the convexity of the exponential function and  Lemma  \ref{lem:3.1}, there exist positive constants  $\tilde{\tau}_0:=\tilde{\tau}_0(T)$ and $\tilde{\epsilon}_0:=\tilde{\epsilon}_0(T),$ such that  when $\tau\in(0,\tilde \tau_0)$ and $\epsilon\in(0,\tilde\epsilon_0),$ 
$\sup_{t_m\in[0,T]}\||e_1(t_{m+1})|^2+|\tilde e_1(t_{m+1})|^2\|_{L^p(\Omega)}\leq C(T)\tau^2. 
$
This finishes the proof. 
\end{proof}

\section{Numerical experiments}\label{s4}
In this section, we first present some numerical experiments to verify our theoretical results on the strong and weak convergence orders. In addition, the long-time performance in calculating the ergodic limit is also illustrated by numerical experiments. 
Then we  investigate   extensions of our strategy for obtaining the second-order    method that preserve both the ergodicity and the exponential integrability, and for  obtaining the conformal symplectic method. 

\subsection{Convergence order and long-time performance}\label{sec4.1} We first consider   the strong convergence order. Take   $T=1$, $\sigma=1$, and $\upsilon=10$. 
The step-sizes of the  numerical solution for the proposed splitting methods are taken as $\tau_i=2^{-i},i=10, 11, 12, 13$. The exact solution is realized by using the same numerical scheme with
small step-size  $\tau=2^{-15}$. 
Define  error function  $
e(\tau_i,n)=\big\{\frac{1}{n}\sum\limits_{k=1}^n(|P_{T/\tau_i+1}^k-P^k(T)|^2+|Q_{T/\tau_i+1}^k-Q^k(T)|^2)\big\}^{\frac{1}{2}},
$
where $\{P_{n+1}^k,Q_{n+1}^k\}$ and $\{P^k(t),Q^k(t)\}$ are the solutions in the $k$-th sample. 
 We take $n=5000$ sample paths to simulate the expectation based on the Monte--Carlo method.   
 It is observed from  Figure \ref{Order} (a) that the mean square strong convergence order is  $1$ for SAVF, SDG, and SPAVF schemes, which verifies our theoretical result in Theorem \ref{thm:1}. 

Then we consider  the weak convergence order. Take the same parameters as above. Define  error function  
$
e^{\prime}(\tau_i,n)=\big|\frac{1}{n}\sum\limits_{k=1}^n(g(P_{T/\tau_i+1}^k,Q_{T/\tau_i+1}^k)-g(P^k(T),Q^k(T)))\big|,
$
where the test function $g(p,q)=\sin(p)\sin(q)$. 
It is observed from Figure \ref{Order} (b) that the weak convergence order of these numerical schemes is also $1$, which conforms  the result in Theorem \ref{thm:3}. 

\begin{figure}[htbp]
\centering

\subfloat[Strong  order]{
\begin{minipage}[t]{0.4\linewidth}
\centering
\includegraphics[height=4.5cm,width=5.8cm]{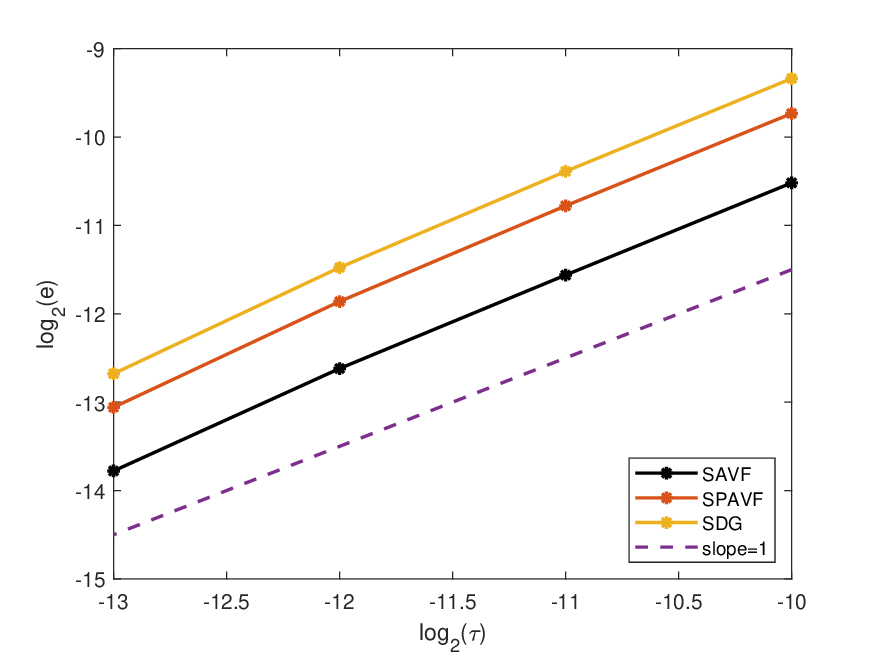}
\end{minipage}
}    \label{fig:1}
\subfloat[Weak  order]{
\begin{minipage}[t]{0.4\linewidth}
\centering
\includegraphics[height=4.5cm,width=5.8cm]{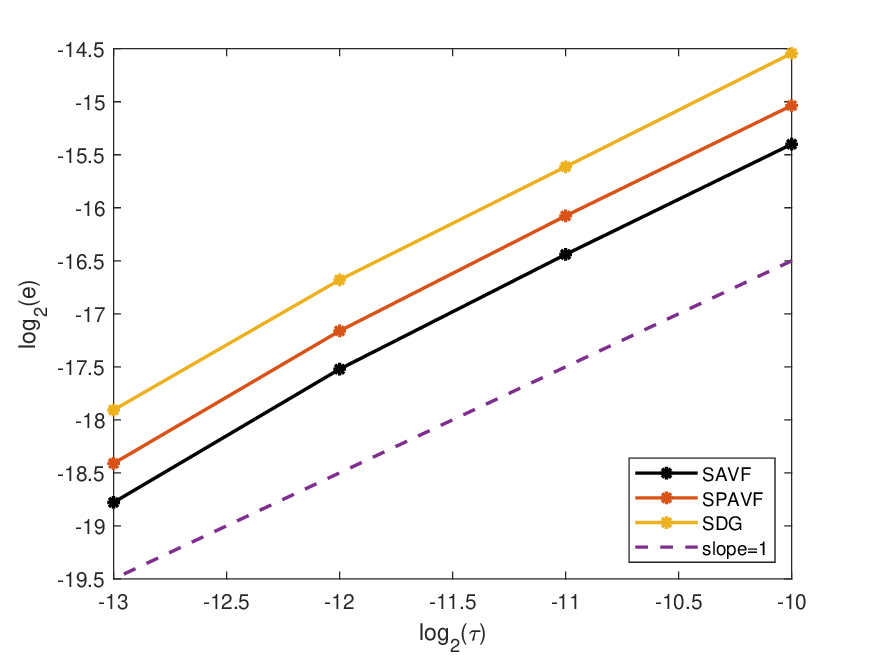}
\end{minipage}
}   \label{fig:2}

\centering
\caption{Convergence order in $\log$-$\log$ scale.}
\label{Order}
\end{figure}

Below we test the long-time performance  of the proposed methods. We take $n=1000$ sample paths for the Monte--Carlo simulation. We take    time-step  $\tau=1.25\times 10^{-6}$ to compute the reference solution, and take  time-step $\tau=10^{-5}$ to compute the numerical solution. 
In Figure \ref{Comp} (a),
 we  compute  the strong error for $t\in[0,1000]$ of the proposed  SAVF scheme,  
which illustrate  that  the proposed  scheme  has stable  strong error in long-time simulation. 
 Similar performance holds for the long-time weak  error, as shown in   Figure \ref{Comp} (b).  
   Here, we take $g(p,q)=\sin(1+(p^2+q^2)^{\frac12})$.

\begin{figure}[tbhp]
\centering
\subfloat[Strong error]{
\begin{minipage}[t]{0.4\linewidth}
\centering
\includegraphics[height=4cm,width=5.8cm]{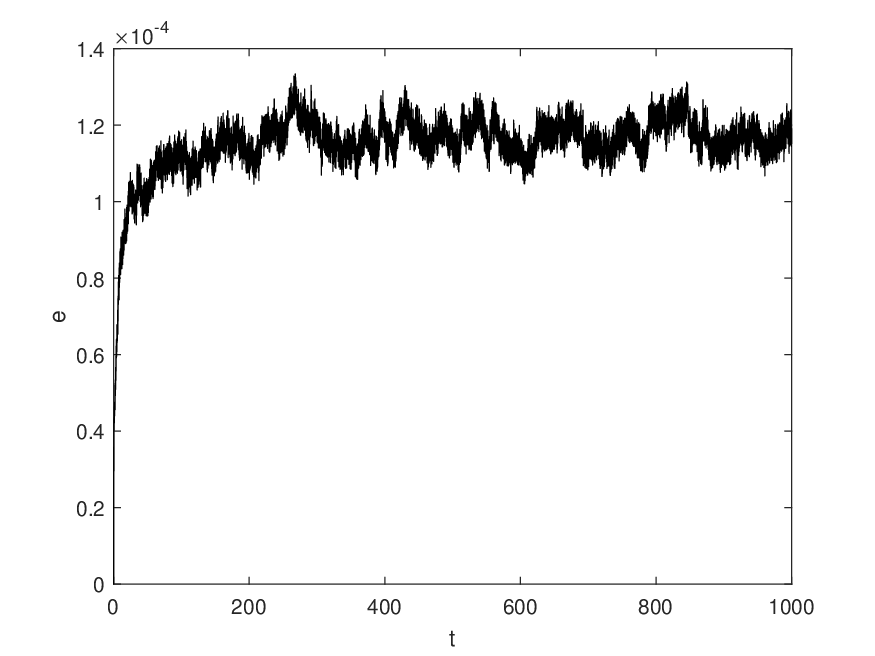}
\end{minipage}
}    \label{fig:3}
\subfloat[Weak error]{
\begin{minipage}[t]{0.4\linewidth}
\centering
\includegraphics[height=4cm,width=5.8cm]{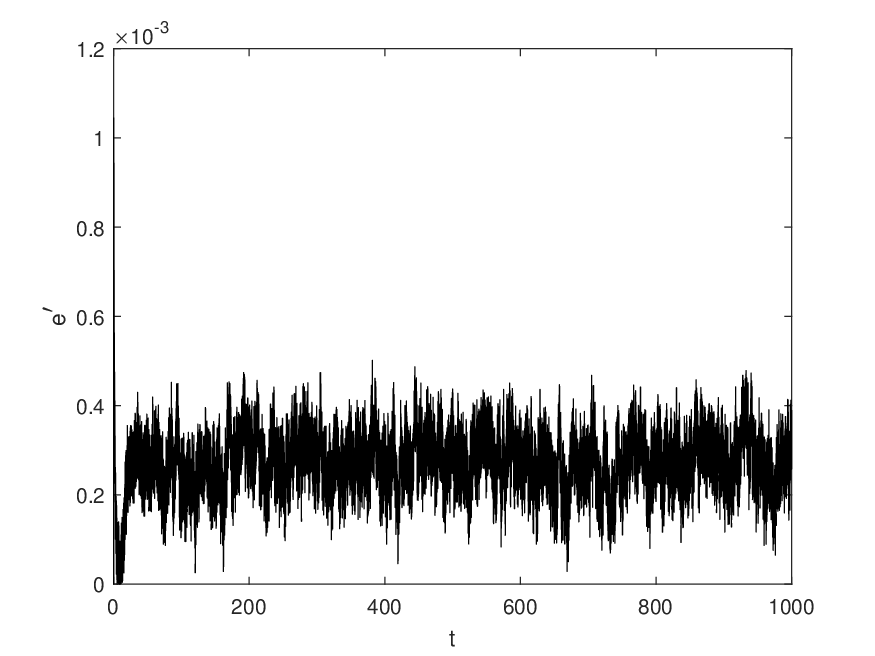}
\end{minipage}
}   \label{fig:4}
\centering
\caption{Long-time error.}
\label{Comp}
\end{figure}




To examine the numerical ergodicity for the proposed splitting methods,  we compute the empirical distribution at different time as follows. We set $\sigma =1$, $\upsilon=15$, and the initial value $P(0)=0, Q(0)=0$ and
compute over $n=5000$ sample paths till $T=512$
with $\tau=2^{-8}$ via the SAVF scheme. We
plot the empirical distribution at different time $t=0,2,256$ in Figure \ref{fig:7}, 
which shows that the empirical distribution converges to the reference Gibbs distribution   as time increases. This  indicates the ergodicity of the
numerical solution.

\begin{figure}[htbp]
  \centering
 \begin{minipage}{0.48\linewidth} 
   \vspace{3pt}
   \centerline{\includegraphics[width=\textwidth]{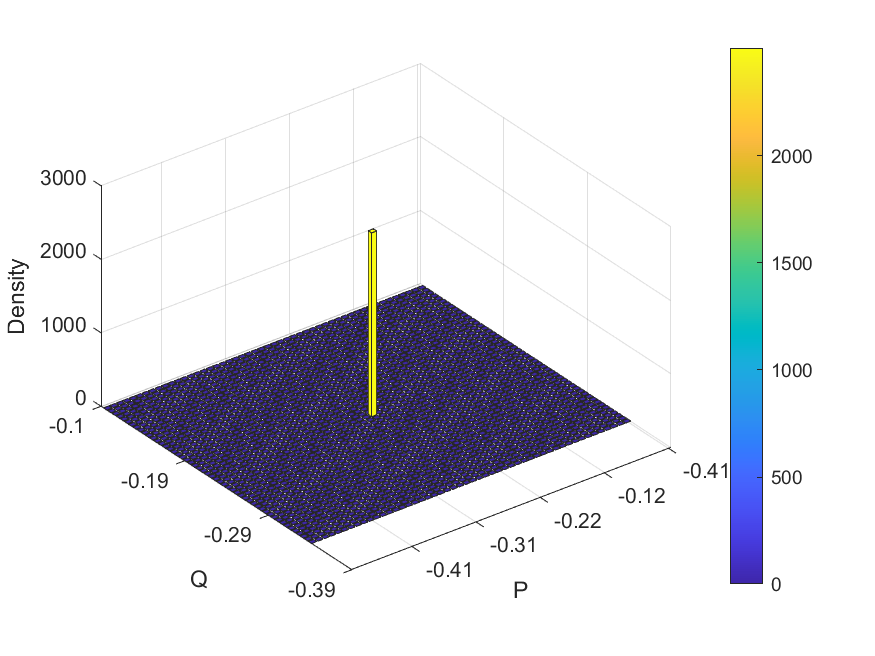}}
   \centerline{(a)}
     \centerline{\includegraphics[width=\textwidth]{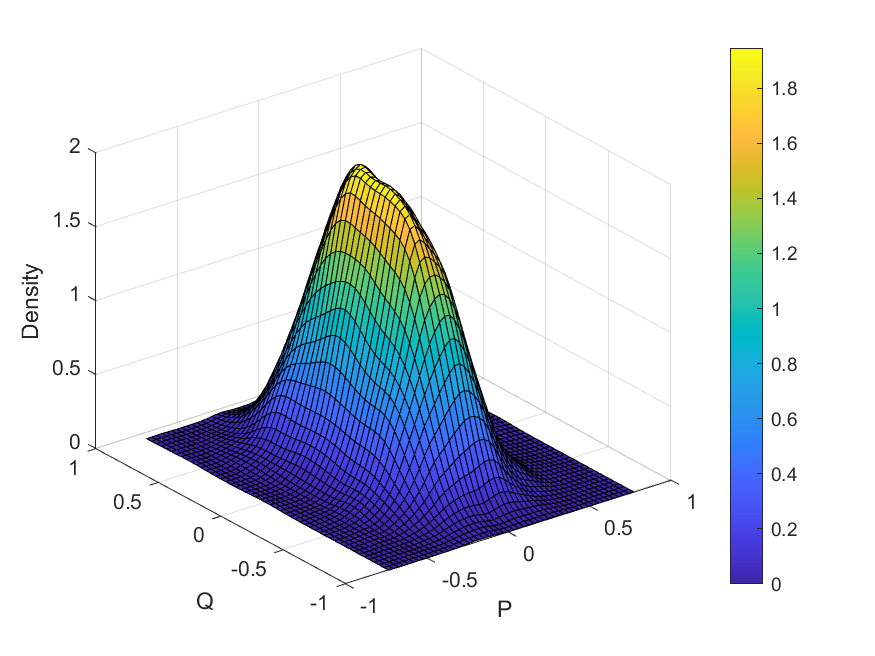}}
     \centerline{(c)}
     \vspace{3pt}
 \end{minipage}
    \begin{minipage}{0.48\linewidth}
      \vspace{3pt}
       \centerline{\includegraphics[width=\textwidth]{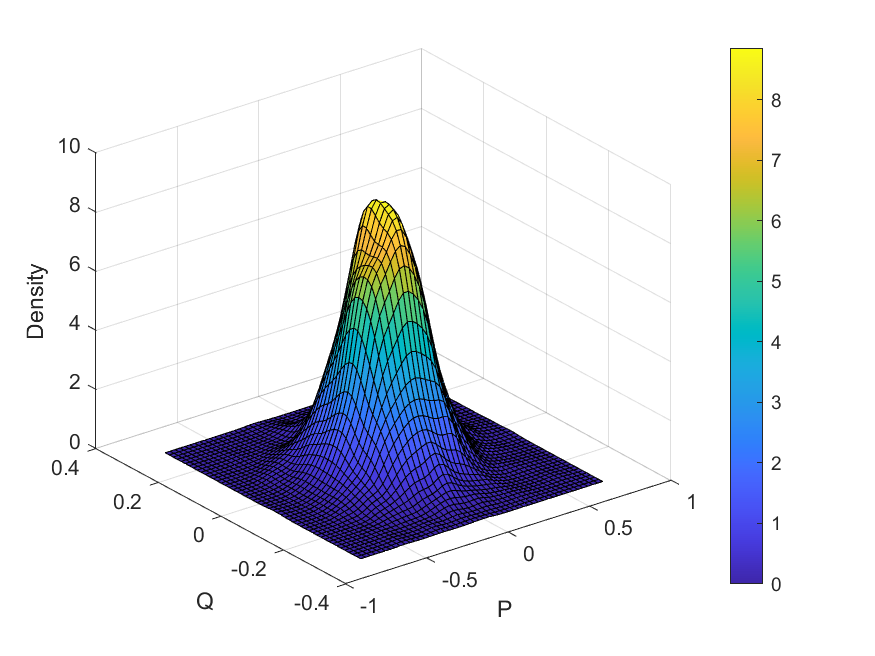}}
       \centerline{(b)}
          \centerline{\includegraphics[width=\textwidth]{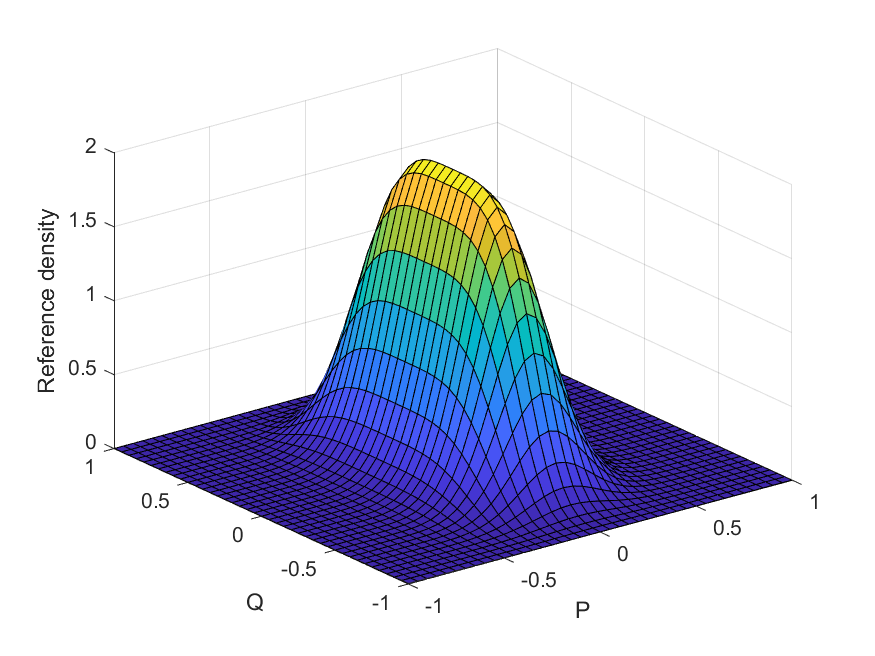}}
        \centerline{(d)}  
     \vspace{3pt}
   \end{minipage}
   	\caption{(a,b,c) The empirical distribution  at different time $t=0,2,256$,  respectively, and  (d)  the reference distribution  
$\rho(p,q)=\frac{\sqrt{\upsilon}}{\sigma\sqrt{\pi}\int_{\mathbb{R}}e^{-\upsilon q^4/(2\sigma^2)}dq}e^{-\frac{2\upsilon}{\sigma^2}(\frac{p^2}{2}+\frac{q^4}{4})}$, see \cite[page 183]{PavGri2014}.}
\label{fig:7}
 \end{figure}

      The mean square displacement of $(P(t),Q(t))$ is defined as
      \begin{equation}\label{MSD}
MSD(t) := \mathbb{E}[|(P(t),Q(t))-(P(0),Q(0))|^2],   
\end{equation}  
which  characterizes the  diffusion behavior and motion properties of molecules in a quantitative way.  It tends to an equilibrium as time grows to infinity, as shown in Figure \ref{fig:8} (a). To see the convergence rate  clearer, 
we take  time $T=512$ to simulate $MSD(\infty)$, and  compute the evolution of $MSD(\infty)-MSD(t)$  with respect to time in Figure \ref{fig:8} (b).   
It is indicated in Figure  \ref{fig:8} (b)  that the mean square displacement approaches to an equilibrium
with an exponential rate.
\begin{figure}[htbp]
 \begin{minipage}{0.48\linewidth}
   \vspace{3pt}
   \subfloat[]{
     \centerline{\includegraphics[width=\textwidth]{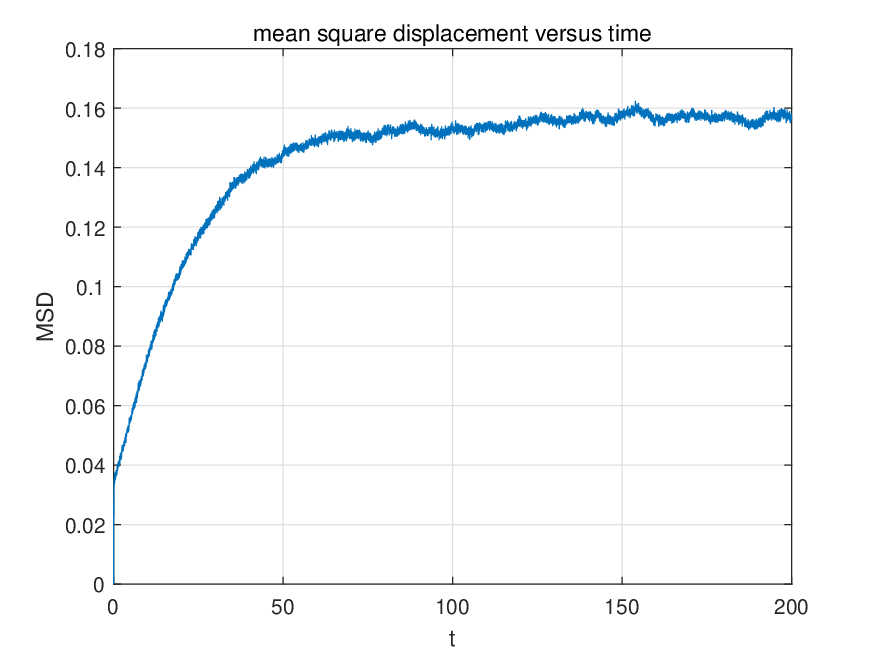}}
     }
 \end{minipage}
    \begin{minipage}{0.48\linewidth}
      \vspace{3pt}
      \subfloat[]{
        \centerline{\includegraphics[width=\textwidth]{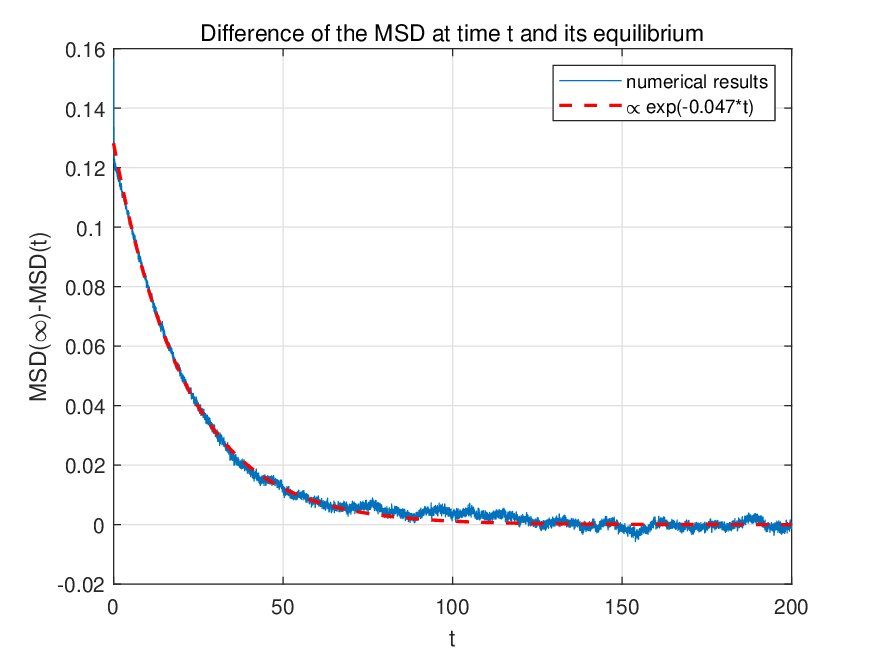}}
        }
     \vspace{3pt}
 \end{minipage}
	\caption{ (a) Mean square displacement and (b) $MSD(\infty)-MSD(t)$.}
	\label{fig:8}       
      \end{figure}

\subsection{Second-order property-preserving  method}\label{sec4.2}
Based on our strategy, one can apply the Strang splitting technique to obtain the second-order weak convergence method that preserve both the ergodicity and the exponential integrability. To be specific,  the Strang splitting technique yields the following evolution operator  \begin{align*}&\quad \mathcal P_{\tau}^{\frac12(\mathcal L_1+\mathcal L_2+\frac12(\mathcal L_3-\mathcal L'_3)),\frac12(\mathcal L_3+\mathcal L'_3)+\mathcal L_4,\frac12(\mathcal L_1+\mathcal L_2+\frac12(\mathcal L_3-\mathcal L'_3))}\\&=e^{\frac{\tau}{2}(\mathcal L_1+\mathcal L_2+\frac12(\mathcal L_3-\mathcal L'_3))}e^{\tau(\frac12(\mathcal L_3+\mathcal L'_3)+\mathcal L_4)}e^{\frac{\tau}{2}(\mathcal L_1+\mathcal L_2+\frac12(\mathcal L_3-\mathcal L'_3))}.\end{align*}
Then one can construct second-order numerical methods associated with this splitting. For example, 
we have the following  Strang SAVF scheme 
\begin{equation*}
\begin{aligned}
 &\bar{P}_{T_n,t_{n+\frac{1}{2}}} = P_n -\frac{\tau\upsilon}{8}(\bar{P}_{T_n,t_{n+\frac{1}{2}}}+P_n)-\frac{\tau}{2}\int_0^1\left(Q_n+\lambda(\bar{Q}_{T_n,t_{n+\frac{1}{2}}}-Q_n)\right)^3d\lambda,
    \\
 &  \bar{Q}_{T_n,t_{n+\frac{1}{2}}} = Q_n+\frac{\tau}{4}(\bar{P}_{T_n, t_{n+1}}+P_n)+\frac{\tau\upsilon}{8}(\bar{Q}_{T_n,t_{n+\frac{1}{2}}}+Q_n),
  \\
  &\tilde{P}_{T_n,t_{n+1}}=e^{-\frac{\upsilon \tau}{2}}\bar{P}_{T_n,t_{n+\frac{1}{2}}}+\sigma \int_{t_n}^{t_{n+1}}e^{-\frac{\upsilon}{2}(t_{n+1}-t)}dW_t,
  \\
 &\tilde{Q}_{T_n,t_{n+1}}=e^{-\frac{\upsilon \tau}{2}}\bar{Q}_{T_n,t_{n+\frac{1}{2}}},
       \\
    &P_{n+1} = \tilde{P}_{T_n,t_{n+1}}-\frac{\tau\upsilon}{8}(P_{n+1}+\tilde{P}_{T_n,t_{n+1}})-\frac{\tau}{2}\int_0^1\left(\tilde{Q}_{T_n,t_{n+1}}+\lambda(Q_{n+1}-\tilde{Q}_{T_n,t_{n+1}})\right)^3d\lambda,\\
    &  Q_{n+1} = \tilde{Q}_{T_n,t_{n+1}}+\frac{\tau}{4}(P_{n+1}+\tilde{P}_{T_n, t_{n+1}})+\frac{\tau\upsilon}{8}(Q_{n+1}+\tilde{Q}_{T_n,t_{n+1}}).
\end{aligned}
\end{equation*}
By the similar proof to that of  Proposition \ref{thm:2}, we can first show that the sequence $\{(P_{3n},Q_{3n})\}_{n\in\mathbb N_+}$ is ergodic. Then combining \eqref{pmoment0} and the preservation of energy for the AVF scheme, one can also derive \eqref{X1moment} for the considered numerical solution, and thus prove the ergodicity of the numerical solution $\{(P_n,Q_n)\}_{n\in\mathbb N_+}.$ The exponential integrability of the numerical solution can be obtained in a similar way as Proposition \ref{lem:3.1} and  thus the proof is omitted. 

We employ  the numerical experiment to illustrate the weak convergence order. Take $g(p,q)=\sin((p^2+q^2)^{\frac12})$, $n=5000$, and $T=100$. The step-sizes of the  numerical solution for the Strang   SAVF scheme are taken as $\tau_i=2^{-i},i=10, 11, 12, 13$.
 The exact solution is realized by using the same numerical scheme with
 small step-size  $\tau=2^{-15}$. It is observed from Figure \ref{Strangweak} that the weak convergence order is $2$.

\begin{figure}[htbp]
\centering
\includegraphics[height=4.5cm,width=6.8cm]{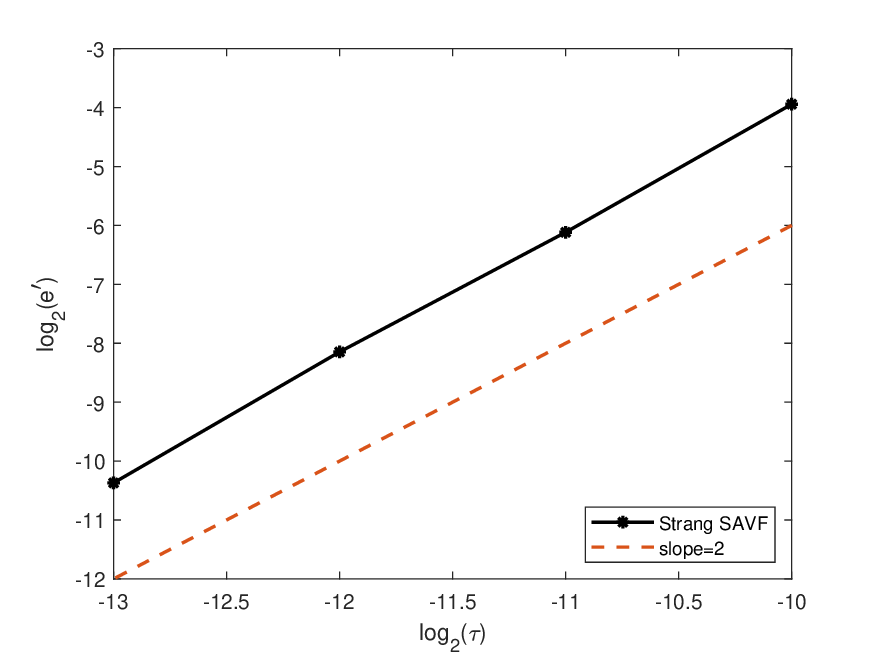}    
\caption{Weak  order for Strang SAVF scheme.}
\label{Strangweak}
\end{figure}

\subsection{Conformal symplectic splitting-based method}\label{sec4.3} When one applies the symplectic method to the  Hamiltonian subsystem \eqref{sub_ex1}, and solve the stochastic subsystem exactly, the corresponding splitting  method \eqref{SAVF} can be proved to preserve the 
 conformal symplectic structure: $\mathrm dP_{n+1}\wedge \mathrm dQ_{n+1}=e^{-\upsilon \tau}\mathrm dP_n\wedge \mathrm dQ_n,\;n\in\mathbb N.$  
To illustrate this by numerical experiment,  we use the symplectic Euler method to solve  subsystem \eqref{sub_ex1}, and obtain the one-step mapping \eqref{eq:2-1} as follows:
\begin{align*}
  &\bar{P}_{T_n,t_{n+1}}=P_n -\tau (Q_n^3+\frac{\upsilon}{2}\bar{P}_{T_n,t_{n+1}}),
  \\
   & \bar{Q}_{T_n,t_{n+1}}=Q_n+\tau ( \bar{P}_{T_n,t_{n+1}}+\frac{\upsilon}{2}Q_n),
  \end{align*}
Further solving subsystem  \eqref{sub_ex3} exactly derives the 
conformal symplectic splitting scheme \eqref{SAVF}.  Set $T=1$, $\upsilon=2$, and $\tau=10^{-4}$. We let the initial values be  in the unit circle and plot the curve of numerical solution using the above conformal symplectic splitting scheme. As shown in Figure
\ref{symplectictu} (a), the area of the curve in the phase space decreases  as time grows. We plot the logarithm of area $S(t)$ of the  curve as time $t$ increases   in Figure \ref{symplectictu} (b), which
verifies  that $S(t) = \pi exp(-\upsilon t)$.

\begin{figure}[htbp]
\centering

\subfloat[The curve in the phase space ]{
\begin{minipage}[t]{0.4\linewidth}
\centering
\includegraphics[height=4.5cm,width=5.8cm]{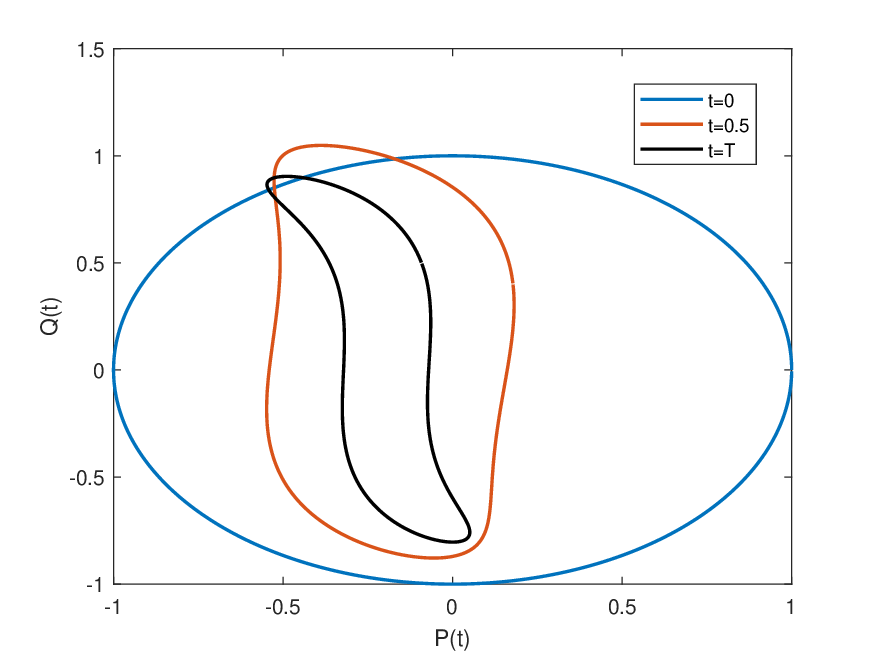}
\end{minipage}
}    
\subfloat[The logarithm of area]{
\begin{minipage}[t]{0.4\linewidth}
\centering
\includegraphics[height=4.5cm,width=5.8cm]{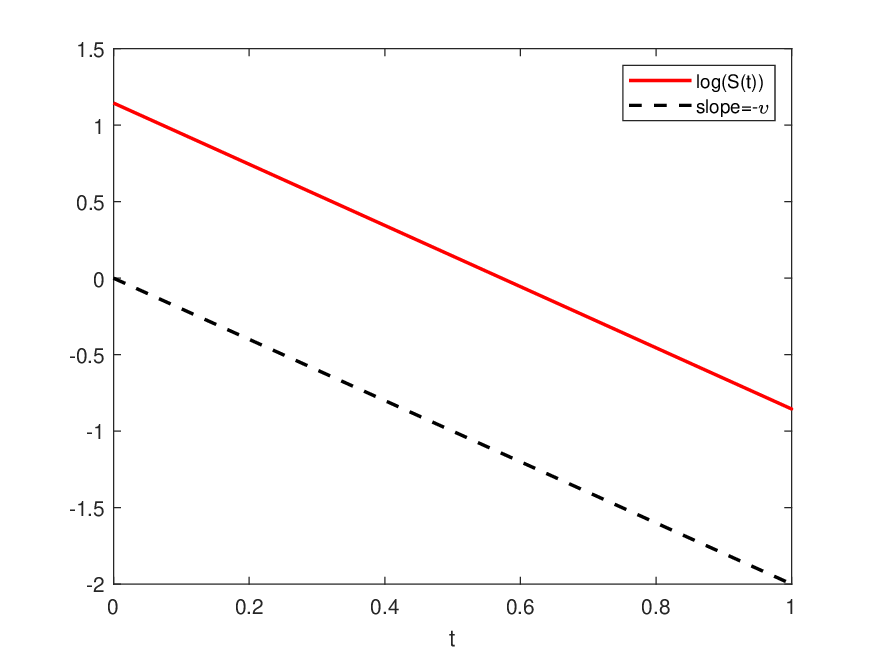}
\end{minipage}
}   
\centering
\caption{Evolution for area of domain in the phase space.}\label{symplectictu}
\end{figure}

\appendix
\section{Some proofs}\label{s2-1}

This section is devoted to presenting some auxiliary proofs, including the proofs of \eqref{exp_H} and Lemma \ref{lem:2}.
\begin{proof}[Proof of \eqref{exp_H}]
  Denote
$ {\mu}(P(t),Q(t))=(  -Q^3(t)-\upsilon P(t), P(t))^{\top}$ and $\tilde{\sigma}=(\sigma,0)^{\top}$. 
 Then one can obtain 
$
\mathcal{D}H(P(t),Q(t)){\mu}(P(t),Q(t))=-\frac{\upsilon}{2}P^2(t)-\frac{\upsilon}{2}Q^4(t)-\frac{\upsilon^2}{2}P(t)Q(t)$,
$tr(\mathcal{D}^2H(P(t),Q(t))\tilde{\sigma}\tilde{\sigma}^T)=\sigma^2,$ and $ |\tilde{\sigma}^T\mathcal{D}H(P(t),Q(t))|^2=|\sigma(P(t)+\frac{\upsilon}{2}Q(t))|^2.
$
This gives that for $\beta>0,$
\begin{align*}
 \mathcal E:= &\,\mathcal{D}H(P(t),Q(t))\mu(P(t),Q(t))+\frac{tr(\mathcal{D}^2H(P(t),Q(t))\tilde{\sigma}\tilde{\sigma}^T)}{2}+\frac{|\tilde{\sigma}^T\mathcal{D}H(P(t),Q(t))|^2}{2e^{\beta t}}\\
  \leq& -\frac{\upsilon}{2}H(P(t),Q(t))+\frac{\sigma^2}{2}H(P(t),Q(t))+\frac{\sigma^2\upsilon^4}{64}+\frac{\sigma^2}{2}.
\end{align*}
Let $\bar V=\upsilon H(P(t),Q(t))-\big(\frac{\sigma^2\upsilon^4}{64}+\frac{\sigma^2}{2}\big)$ and $\beta:=\sigma^2.$ Then 
using  \cite[Lemma
3.2]{CuiSheng2022}, we derive
\begin{align*}
\mathbb{E}\Big[\exp\Big\{\frac{H(P(t),Q(t))}{e^{\sigma^2t}}+\int_0^t\frac{\upsilon H(P(r),Q(r))-(\frac{\sigma^2\upsilon^4}{64}+\frac{\sigma^2}{2})}{e^{\sigma^2r}}dr\Big\}\Big]\leq e^{H(P_0,Q_0)},
\end{align*}
which together with \eqref{HCH} and \eqref{equ:1} yields  \begin{align*}
\mathbb E\Big[\exp\Big\{\frac{C_e(|P(t)|^2+|Q(t)|^4)}{e^{\sigma^2t}}\Big\}\Big]\leq e^{(\upsilon C_H+\frac{\sigma^2\upsilon^4}{64}+\frac{\sigma^2}{2})t+C+H(P_0,Q_0)}.
\end{align*} 
This completes the proof by taking supremum with $t\in[0,T].$
\end{proof}

\begin{proof}[Proof of Lemma \ref{lem:2}]


We use the induction argument on $p\in\mathbb N_+$ to prove \eqref{bound2} and \eqref{bound1}.  
We first prove  the case of $p=1.$ Without loss of generality, we consider $H(p,q)+C_H$ instead of $H$.  
By the It\^o formula, we have
    \begin{align*}
    dH(\tilde{P}_{T_n,t},\tilde{Q}_{T_n,t})
   & =(\tilde{P}_{T_n,t}+\frac{\upsilon}{2}\tilde{Q}_{T_n,t})d\tilde{P}_{T_n,t}+(\tilde{Q}_{T_n,t}^3+\frac{\upsilon}{2}\tilde{P}_{T_n,t})d\tilde{Q}_{T_n,t}
      +\frac{\sigma^2}{2}dt
      \\
      &\leq -\upsilon  H(\tilde{P}_{T_n,t},\tilde{Q}_{T_n,t})dt
      +\frac{\sigma^2}{2}dt+  \big(\tilde{P}_{T_n,t}+\frac{\upsilon}{2}\tilde{Q}_{T_n,t}\big)\sigma dW_t,
\end{align*}
 which yields that 
$H(\tilde{P}_{T_n,t},\tilde{Q}_{T_n,t})\leq   e^{-\upsilon (t-t_n)}H(\tilde{P}_{T_{n},t_{n}},\tilde{Q}_{T_{n},t_n})+\frac{\sigma^2}{2}\int_{t_n}^te^{-v (t-s)}ds
 +\int_{t_n}^te^{-v (t-s)}\big(\tilde{P}_{T_n,s}+\frac{\upsilon}{2}\tilde{Q}_{T_n,s}\big)\sigma dW_s.
$
By the preservation of the Hamiltonian for \eqref{eq:2-1}, we obtain $H(\tilde{P}_{T_{n},t_{n}},\tilde{Q}_{T_{n},t_n})=H(\bar {P}_{T_{n},t_{n+1}},\bar{Q}_{T_{n},t_{n+1}})=H(P_n,Q_n),$ and thus by iteration, we arrive at  
{\small\begin{align}\label{Ito1}
H(\tilde{P}_{T_n,t},\tilde{Q}_{T_n,t}) \leq  e^{-\upsilon t}H(P_0,Q_0)+\frac{\sigma^2}{2}\int_0^t e^{-\upsilon (t-s)}ds+\int_0^te^{-\upsilon (t-s)}\big(\tilde{P}_{T_{n_s},s}+\frac{\upsilon}{2}\tilde{Q}_{T_{n_{s}},s}\big)\sigma dW_s.
\end{align}} 
Applying the maximal inequality and combining \eqref{equ:1} give that for any fixed $T>0,$
{\small\begin{align*}
\mathbb E\Big[\sup_{t\in[0,T]}(|\tilde{P}_{T_{n_t},t}|^2+|\tilde{Q}_{T_{n_t},t}|^4)\Big]\leq C(H(P_0,Q_0)+1)+\int_0^T\mathbb E\Big[\sup_{r\in[0,s]}|\tilde P_{T_{n_r},r}|^2+|\tilde Q_{T_{n_r},r}|^4\Big]ds,
\end{align*}}
which together with the Gr\"onwall inequality leads to \eqref{bound2} with $p=1.$ 
 Then taking expectation on both sides of \eqref{Ito1} and using \eqref{equ:1} finish the proof for $p=1.$ 

Assume that  \eqref{bound2} and \eqref{bound1}  hold for the case of $p-1$ with $ p\ge 2.$ Then we show the case of $p.$ 
By the It\^o formula, we have
    \begin{align*}
  &  dH^p(\tilde{P}_{T_n,t},\tilde{Q}_{T_n,t})\\
      =&-\upsilon p H^p(\tilde{P}_{T_n,t},\tilde{Q}_{T_n,t})dt+p(p-1)\sigma^2H^{p-2}(\tilde{P}_{T_n,t},\tilde{Q}_{T_n,t})\Big(\frac{\tilde{P}_{T_n,t}^2}{2}+\frac{\upsilon}{2}\tilde{P}_{T_n,t}\tilde{Q}_{T_n,t}+\frac{\upsilon^2}{8}\tilde{Q}_{T_n,t}^2\Big)dt
      \\
      &+\frac{1}{2}p\sigma^2H^{p-1}(\tilde{P}_{T_n,t},\tilde{Q}_{T_n,t})dt+\sigma p H^{p-1}(\tilde{P}_{T_n,t},\tilde{Q}_{T_n,t})\Big(\tilde{P}_{T_n,t}+\frac{\upsilon}{2}\tilde{Q}_{T_n,t}\Big)dW_t.
\end{align*}
From the Young inequality, we derive 
\begin{align*}
 &dH^p(\tilde{P}_{T_n,t},\tilde{Q}_{T_n,t})
  \leq -\upsilon
pH^p(\tilde{P}_{T_n,t},\tilde{Q}_{T_n,t})dt+p(p-\frac12)\sigma^2H^{p-1}(\tilde{P}_{T_n,t},\tilde{Q}_{T_n,t})dt\\
&
+Cp(p-1)\sigma^2H^{p-2}(\tilde{P}_{T_n,t},\tilde{Q}_{T_n,t})dt+\sigma p H^{p-1}(\tilde{P}_{T_n,t},\tilde{Q}_{T_n,t})\Big(\tilde{P}_{T_n,t}+\frac{\upsilon}{2}\tilde{Q}_{T_n,t}\Big)dW_t. 
\end{align*}
Let $\gamma\in(0,1)$ be a small number to be determined later. The Young inequality gives 
\begin{align}\label{unbound1}
d (e^{\gamma pt}H^p(\tilde{P}_{T_n,t},\tilde{Q}_{T_n,t}))&\leq e^{\gamma pt}(\gamma p -\upsilon p+\epsilon p)H^p(\tilde{P}_{T_n,t},\tilde{Q}_{T_n,t})dt+C(\epsilon)e^{\gamma pt}dt\notag\\
&\quad+\sigma p H^{p-1}(\tilde{P}_{T_n,t},\tilde{Q}_{T_n,t})\Big(\tilde{P}_{T_n,t}+\frac{\upsilon}{2}\tilde{Q}_{T_n,t}\Big)e^{\gamma pt}dW_t.
\end{align}
One can choose $\epsilon$ and $\gamma$ so that $\gamma-\upsilon+\epsilon\leq -\frac{\upsilon}{2}<0.$  
Notice that  for any fixed $T>0,$ 
\begin{align*}
&\quad \mathbb E\Big[\sup_{t\in[0,T]}\Big|\int_0^tH^{p-1}(\tilde{P}_{T_{n_s},s},\tilde{Q}_{T_{n_s},s})\big(\tilde{P}_{T_{n_s},s}+\frac{\upsilon}{2}\tilde{Q}_{T_{n_s},s}\big)e^{-\gamma p(t-s)}dW_s\Big|\Big]\\
&\leq C\Big(\mathbb E\Big[\sup_{s\in[0,T]}H^{p-1}(\tilde{P}_{T_{n_s},s},\tilde{Q}_{T_{n_s},s})\Big]\Big)^{\frac12}\Big(\mathbb E\Big[\int_0^TH^p(\tilde{P}_{T_{n_s},s},\tilde{Q}_{T_{n_s},s})\Big]ds\Big)^{\frac12}+C(T)\\
&\leq C(T)+C(T)\mathbb E\Big[\int_0^TH^p(\tilde{P}_{T_{n_s},s},\tilde{Q}_{T_{n_s},s})\Big]ds. 
\end{align*}
Hence, applying  the Gr\"onwall inequality yields \eqref{bound2}. 
Furthermore, it follows from \eqref{unbound1} that  
\begin{align}\label{pmoment0}
\mathbb E[H^p(\tilde{P}_{T_n,t},\tilde{Q}_{T_n,t})]\leq e^{-\frac{\upsilon}{2}p(t-t_n)}\mathbb E[H^p(\tilde{P}_{T_n,t_n},\tilde{Q}_{T_n,t_n})]+Ce^{-\frac{\upsilon}{2}p(t-t_n)}(t-t_n). 
\end{align} 
Then combining the preservation of Hamiltonian for  \eqref{eq:2-1}, we have
\begin{align}\label{X1moment}
&\mathbb{E}[H^p(P_{n+1},Q_{n+1})]
\leq e^{-\frac{\upsilon}{2}p\tau}\mathbb{E}[H^p(P_n,Q_n)]+Ce^{-\frac{\upsilon}{2} p\tau}\tau, 
\end{align}
which yields 
$
  \mathbb{E}[H^p(P_{n+1},Q_{n+1})]\leq e^{-\frac{\upsilon}{2} np\tau}H^p(P_0,Q_0)+C.
$
Combining \eqref{equ:1} finishes the proof of \eqref{bound1}. 
\end{proof}

\bibliographystyle{plain}
\bibliography{references}

\end{document}